\renewcommand*{\backrefalt}[4]{%
	\ifcase #1 \footnotesize{(Not cited.)}%
	\or        \footnotesize{(Cited on page~#2.)}%
	\else      \footnotesize{(Cited on pages~#2.)}%
	\fi}
\theoremstyle{plain}
\newtheorem{thm}{\protect\theoremname}
\newenvironment{lyxlist}[1]
	{\begin{list}{}
		{\settowidth{\labelwidth}{#1}
		 \setlength{\leftmargin}{\labelwidth}
		 \addtolength{\leftmargin}{\labelsep}
		 }}
	{\end{list}}
\theoremstyle{plain}
\newtheorem{lem}{\protect\lemmaname}
\theoremstyle{plain}
\newtheorem{cor}{\protect\corollaryname}
\providecommand{\corollaryname}{Corollary}
\providecommand{\lemmaname}{Lemma}
\providecommand{\theoremname}{Theorem}
\newcommand{\lyxaddress}[1]{
	\par {\raggedright #1
		\vspace{1.4em}
		\noindent\par}
}
\providecommand{\keywords}[1]
{
	\small	
	\textbf{\textit{Keywords:}} #1
}
\begin{document}
\title{Approximation of probability density functions\\ via location-scale finite mixtures in Lebesgue spaces \footnote{To appear in {\bf \href{https://www.tandfonline.com/toc/lsta20/current}{Communications in Statistics - Theory and Methods}}.}}
\author{TrungTin Nguyen$^{1**}$,
\and Faicel Chamroukhi$^{1,2}$,
\and Hien D Nguyen$^{2}$, 
\and and Geoffrey J McLachlan$^{3}$}
\date{}
\maketitle

\lyxaddress{
$^{1}$Normandie Univ, UNICAEN, CNRS, LMNO, 14000 Caen, France.\\
$^{2}$School of Engineering and Mathematical Sciences. Department of Mathematics and Statistics, La Trobe University, Melbourne, Victoria, Australia.\\ 
$^{3}$School of Mathematics and Physics, University of Queensland, St. Lucia, Brisbane, Australia.\\
$^{**}$Corresponding author.
}

\begin{abstract}
The class of location-scale finite mixtures is of enduring interest both from applied and theoretical
perspectives of probability and statistics. We establish and prove the following results: to an arbitrary degree
of accuracy, (a) location-scale mixtures of a continuous probability
density function (PDF) can approximate any continuous PDF, uniformly,
on a compact set; and (b) for any finite $p\ge1$, location-scale mixtures
of an essentially bounded PDF can approximate any PDF in $\mathcal{L}_{p}$,
in the $\mathcal{L}_{p}$ norm.
\end{abstract}

\keywords{Mixture models, approximation theory, uniform approximation, probability density functions.}

\section{Introduction}

Define $\left(\mathbb{E},\left\Vert \cdot\right\Vert _{\mathbb{E}}\right)$
to be a normed vector space (NVS), and let $x\in\left(\mathbb{R}^{n},\left\Vert \cdot\right\Vert _{2}\right)$,
for some $n\in\mathbb{N}$, where $\left\Vert \cdot\right\Vert _{2}$
is the Euclidean norm. Let $f:\mathbb{R}^{n}\rightarrow\mathbb{R}$
be a function satisfying $f\ge0$ and $\int f\text{d}\lambda=1$,
where $\lambda$ is the Lebesgue measure. We say that $f$ is a probability
density function (PDF) on the domain $\mathbb{R}^{n}$ (which we will
omit for brevity, from hereon in). Let $g:\mathbb{R}^{n}\rightarrow\mathbb{R}$
be another PDF and define the functional class $\mathcal{M}^{g}=\bigcup_{m\in\mathbb{N}}\mathcal{M}_{m}^{g}$,
where
\[
\mathcal{M}^{g}_m=\left\{ h_{m}^{g}:h_{m}^{g}\left(\cdot\right)=\sum_{i=1}^{m}\frac{c_{i}}{\sigma_{i}^{n}}g\left(\frac{\cdot-\mu_{i}}{\sigma_{i}}\right),\mu_{i}\in\mathbb{\mathbb{R}}^{n},\sigma_{i}\in\mathbb{R}_{+},c\in\mathbb{S}^{m-1},i\in\left[m\right]\right\} \text{,}
\]
$c^{\top}=\left(c_{1},\dots,c_{m}\right)$, $\mathbb{R}_{+}=\left(0,\infty\right)$,
\[
\mathbb{S}^{m-1}=\left\{ c\in\mathbb{R}^{m}:\sum_{i=1}^{m}c_{i}=1,c_{i}\ge0,i\in\left[m\right]\right\} \text{,}
\]
$\left[m\right]=\left\{ 1,\dots,m\right\} $, and $\left(\cdot\right)^{\top}$
is the matrix transposition operator. We say that $h_{m}^{g}\in\mathcal{M}^{g}$
is an $m\text{-component}$ location-scale finite mixture of the PDF
$g$. The class $\mathcal{M}^{g}$ has enjoyed enduring practical
and theoretical interest throughout the years, as reported in the
volumes of \citet{Everitt:1981aa}, \citet{mclachlan1988mixture}, \citet{Lindsay:1995aa},
\citet{McLachlan:2000aa}, \citet{fruhwirth2006finite}, \citet{Mengersen:2011aa}, \citet{fruhwirth2019handbook}, and \citet{nguyen2021approximationMoE}.

We say that $f$ is compactly supported on $\mathbb{K}\subset\mathbb{R}^{n}$,
if $\mathbb{K}$ is compact and if $\mathbf{1}_{\mathbb{K}^{\complement}}f=0,$
where $\mathbf{1}_{\mathbb{X}}$ is the indicator function that takes
value 1 when $x\in\mathbb{X}$, and $0$ elsewhere, and where $\left(\cdot\right)^{\complement}$
is the set complement operator (i.e. $\mathbb{X}^{\complement}=\mathbb{R}^{n}\backslash\mathbb{X}$).
Here, $\mathbb{X}$ is a generic subset of $\mathbb{R}^{n}$. Further,
say that $f\in\mathcal{L}_{p}\left(\mathbb{X}\right)$ for any $1\le p<\infty$,
if 
\[
\left\Vert f\right\Vert _{\mathcal{L}_{p}\left(\mathbb{X}\right)}=\left(\int\left|\mathbf{1}_{\mathbb{X}}f\right|^{p}\text{d}\lambda\right)^{1/p}<\infty\text{,}
\]
and say that $f\in\mathcal{L}_{\infty}\left(\mathbb{X}\right)$, the class of essentially bounded measurable functions, if 
\[	
\left\Vert f\right\Vert _{\mathcal{L}_{\infty}\left(\mathbb{X}\right)}=\inf\big\{ a\ge0:\lambda\left(\left\{ x\in\mathbb{X}:\left|f\left(x\right)\right|>a\right\} \right)=0\big\} <\infty\text{,}
\]
where we call $\left\Vert \cdot\right\Vert _{\mathcal{L}_{p}\left(\mathbb{X}\right)}$
the $\mathcal{L}_{p}\text{-norm}$ on $\mathbb{X}$. Denote the class of all bounded functions on $\mathbb{X}$ by
\[
\mathcal{B}\left(\mathbb{X}\right)=\left\{ f\in\mathcal{L}_{\infty}\left(\mathbb{X}\right):\exists a\in\left[0,\infty\right)\text{, such that }\left|f\left(x\right)\right|\le a,\forall x\in\mathbb{X}\right\} 
\]
 and write 
\[
\left\Vert f\right\Vert _{\mathcal{B}\left(\mathbb{X}\right)}=\sup_{x\in\mathbb{X}}\left|f\left(x\right)\right|\text{.}
\]
For brevity, we shall write $\mathcal{L}_{p}\left(\mathbb{R}^{n}\right)=\mathcal{L}_{p}$,
$\mathcal{B}\left(\mathbb{R}^{n}\right)=\mathcal{B}$, $\left\Vert f\right\Vert _{\mathcal{L}_{p}\left(\mathbb{R}^{n}\right)}=\left\Vert f\right\Vert _{\mathcal{L}_{p}}$,
and $\left\Vert f\right\Vert _{\mathcal{B}\left(\mathbb{R}^{n}\right)}=\left\Vert f\right\Vert _{\mathcal{B}}$.

Lastly, we denote the class of continuous functions and uniformly
continuous functions by $\mathcal{C}$ and $\mathcal{C}^{u}$, respectively.
The classes of bounded continuous function shall be denoted by $\mathcal{C}_{b}=\mathcal{C}\cap\mathcal{B}$. Note that the class of continuous functions that vanish at infinity, defined as

\[
\mathcal{C}_{0}=\left\{ f\in\mathcal{C}:\forall\epsilon>0,\exists\text{ a compact }\mathbb{K}\subset\mathbb{R}^{n}\text{, such that }\left\Vert f\right\Vert _{\mathcal{B}\left(\mathbb{K}^{\complement}\right)}<\epsilon\right\} \text{,}
\]
is a subset of $\mathcal{C}_{b}$.

An important characteristic of the class $\mathcal{M}^{g}$ is its
capability of approximating larger classes of PDFs in various ways. Motivated by the incomplete proofs of \citet[Lem 3.1]{Xu:1993aa}
and Theorem 5 from \citet[Chapter 20]{Cheney:2000aa}, as well as the
results of \citet{Nestoridis:2007aa}, \citet{Bacharoglou:2010aa},
and \citet{nestoridis2011universal}, \citet{nguyen2020approximation} established and proved the
following theorem regarding sequences of PDFs $\left\{ h_{m}^{g}\right\} $
from $\mathcal{M}^{g}$.
\begin{thm}
[Theorem 5 from \cite{nguyen2020approximation}]\label{thm last paper}Let $h_{m}^{g}\in\mathcal{M}^{g}$
denote an $m\text{-component}$ location finite mixture PDF. If we
assume that $f$ and $g$ are PDFs and that $g\in\mathcal{C}_{0}$,
then the following statements are true.
\begin{lyxlist}{00.00.0000}
\item [{(a)}] For any $f\in\mathcal{C}_{0}$, there exists a sequence $\left\{ h_{m}^{g}\right\} _{m=1}^{\infty}\subset\mathcal{M}^{g}$,
such that
\[
\lim_{m\rightarrow\infty}\left\Vert f-h_{m}^{g}\right\Vert _{\mathcal{L}_{\infty}}=0\text{.}
\]
\item [{(b)}] For any $f\in\mathcal{C}_{b}$, and compact set $\mathbb{K}\subset\mathbb{R}^{n}$,
there exists a sequence $\left\{ h_{m}^{g}\right\} _{m=1}^{\infty}\subset\mathcal{M}^{g}$,
such that 
\[
\lim_{m\rightarrow\infty}\left\Vert f-h_{m}^{g}\right\Vert _{\mathcal{L}_{\infty}\left(\mathbb{K}\right)}=0\text{.}
\]
\item [{(c)}] For any $p\in\left(1,\infty\right)$ and $f\in\mathcal{L}_{p}$,
there exists a sequence $\left\{ h_{m}^{g}\right\} _{m=1}^{\infty}\subset\mathcal{M}^{g}$,
such that 
\[
\lim_{m\rightarrow\infty}\left\Vert f-h_{m}^{g}\right\Vert _{\mathcal{L}_{p}}=0\text{.}
\]
\item [{(d)}] For any measurable $f$, there exists a sequence $\left\{ h_{m}^{g}\right\} _{m=1}^{\infty}\subset\mathcal{M}^{g}$,
such that
\[
\lim_{m\rightarrow\infty}h_{m}^{g}=f\text{, almost everywhere.}
\]
\item [{(e)}] If $\nu$ is a $\sigma\text{-finite}$ Borel measure on $\mathbb{R}^{n}$,
then for any $\nu\text{-measurable }$ $f$, there exists a sequence
$\left\{ h_{m}^{g}\right\} \mathcal{M}^{g}$, such that
\[
\lim_{m\rightarrow\infty}h_{m}^{g}=f\text{, almost everywhere, with respect to }\nu\text{.}
\]
\end{lyxlist}
Further, if we assume that 
\[
g\in\left\{ g\in\mathcal{C}_{0}:\forall x\in\mathbb{R}^{n}\text{, }\left|g\left(x\right)\right|\le\theta_{1}\left(1+\left\Vert x\right\Vert _{2}\right)^{-n-\theta_{2}}\text{, }\left(\theta_{1},\theta_{2}\right)\in\mathbb{R}_{+}^{2}\right\} \text{,}
\]
then the following is also true.
\begin{lyxlist}{00.00.0000}
\item [{(f)}] For any $f\in\mathcal{C}$, there exists a sequence $\left\{ h_{m}^{g}\right\} _{m=1}^{\infty}\subset\mathcal{M}^{g}$,
such that
\[
\lim_{m\rightarrow\infty}\left\Vert f-h_{m}^{g}\right\Vert _{\mathcal{L}_{1}}=0\text{.}
\]
\end{lyxlist}
\end{thm}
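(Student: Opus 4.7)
The plan is to obtain part~(f) as an immediate corollary of part~(d) of Theorem~\ref{thm last paper} by invoking Scheffé's lemma. First, observe that by the definition of $\mathcal{M}^g$ the weights lie in the simplex $\mathbb{S}^{m-1}$, so every $h_m^g\in\mathcal{M}^g$ is a convex combination of the translated and rescaled PDFs $\sigma_i^{-n}g\bigl((\cdot-\mu_i)/\sigma_i\bigr)$ and is itself a PDF; in particular $\int h_m^g\,\mathrm{d}\lambda=1$. Since $f$ is measurable (being continuous), part~(d) already supplies a sequence $\{h_m^g\}_{m=1}^{\infty}\subset\mathcal{M}^g$ with $h_m^g\to f$ almost everywhere. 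Because $\int h_m^g\,\mathrm{d}\lambda=\int f\,\mathrm{d}\lambda=1$, Scheffé's lemma upgrades this a.e.\ convergence of nonnegative integrable functions with matching total mass to $\mathcal{L}_1$ convergence, giving $\|f-h_m^g\|_{\mathcal{L}_1}\to 0$, which is precisely (f).

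A more constructive alternative that genuinely exploits the polynomial decay $|g(x)|\le\theta_1(1+\|x\|_2)^{-n-\theta_2}$ runs via mollification: set $g_\sigma(x)=\sigma^{-n}g(x/\sigma)$, pick $\sigma>0$ so small that $\|f-f\ast g_\sigma\|_{\mathcal{L}_1}<\varepsilon/2$, truncate $f$ outside a large ball $B_R$ (bounding the tail using $f\in\mathcal{L}_1$), and then discretise the convolution $f\ast g_\sigma$ as a Riemann sum over a fine partition $\{A_i\}_{i=1}^{m}$ of $B_R$ with centres $\mu_i\in A_i$, assigning weights $c_i=\int_{A_i}f\,\mathrm{d}\lambda$ re-normalised to lie in $\mathbb{S}^{m-1}$. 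The decay of $g$, combined with a continuity-of-translation estimate in $\mathcal{L}_1$ for $g_\sigma$, controls the resulting discretisation error uniformly in the mesh.

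The hard part of the constructive route would be juggling the truncation, discretisation, and re-normalisation errors simultaneously as $\sigma\to 0$, since the mesh of the partition has to shrink with $\sigma$ to keep up with the concentration of the kernel $g_\sigma$. The Scheffé route bypasses this bookkeeping entirely, which is why I would adopt it; a side consequence is that the polynomial-decay hypothesis on $g$ appears unnecessary for the stated $\mathcal{L}_1$ conclusion, and is likely an artefact of a more direct constructive argument in the original reference.
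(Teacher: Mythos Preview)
Your Scheff\'e argument for part~(f) is correct: every $h_m^g\in\mathcal{M}^g$ is a PDF, part~(d) supplies a.e.\ convergence, and Scheff\'e upgrades this to $\mathcal{L}_1$ convergence. You are also right that the polynomial-decay hypothesis on $g$ is then superfluous; only $g\in\mathcal{C}_0$ (needed for~(d)) is used.

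Note, however, that Theorem~\ref{thm last paper} is not proved in this paper at all---it is quoted from \cite{nguyen2020approximation} as background. The present paper's contribution is Theorem~\ref{thm main res}, whose part~(b) with $p=1$ subsumes~(f) under the weaker hypotheses $f\in\mathcal{L}_1$, $g\in\mathcal{L}_\infty$. The route taken there is quite different from yours: first approximate $f$ by $g_k\star f$ in $\mathcal{L}_1$ via approximate identities (Lemma~\ref{lem approximate identities}, Corollary~\ref{cor dilation}), then show $g_k\star f\in\overline{\mathcal{M}^g}$ by a Hahn--Banach separation argument (Lemma~\ref{lem main 1}): if not, a separating functional represented through $\mathcal{L}_\infty$--$\mathcal{L}_1$ duality combined with Fubini yields a contradiction. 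Your constructive sketch (mollify, truncate, discretise the convolution) is closer to how the paper handles the uniform-on-compacta case in Lemma~\ref{lem bounded convolution}, but for $\mathcal{L}_p$ the paper replaces the explicit bookkeeping by functional analysis.

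What each approach buys: your Scheff\'e route is by far the shortest, but it is specific to $p=1$ and relies on part~(d) as a black box (itself nontrivial and requiring $g\in\mathcal{C}_0$). The paper's Hahn--Banach argument works uniformly for all $p\in[1,\infty)$, needs only $g\in\mathcal{L}_\infty$, and is self-contained relative to standard results. Your observation that the decay hypothesis is an artefact is exactly the point the paper makes in passing from~(f) to Theorem~\ref{thm main res}(b).
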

The goal of this work is to seek the weakest set of assumptions in order to establish approximation theoretical results over the widest class of probability density problems, possible. 
In this paper, we establish Theorem \ref{thm main res} which improves upon Theorem \ref{thm last paper} in a
number of ways. More specifically, while statements (a), (c), (d), and (e) still hold under the same assumptions as in Theorem \ref{thm last paper}; statement (b) from Theorem \ref{thm last paper} is improved to apply to a larger class of target function $f\in\mathcal{C}$, see more in statement (a) of Theorem \ref{thm main res}; and statement (f) from Theorem \ref{thm last paper} is drastically
improved to apply to any $f\in\mathcal{L}_{1}$ and $g\in\mathcal{L}_{\infty}$, see more in statement (b) of Theorem \ref{thm main res}.
We note in particular that our improvement with respect to statement
(b) from Theorem \ref{thm last paper} yields exactly the result of Theorem 5 from \citet[Chapter 20]{Cheney:2000aa},
which was incorrectly proved (see also \citet[Theorem 33.2]{DasGupta:2008aa}). 

The remainder of the article progresses as follows.
The main result of this paper is stated in Section \ref{sec.mainResult}. Technical
preliminaries to the proof of the main result are presented in Section \ref{sec.technicalPreliminaries}. The proof is then established in Section \ref{sec.proofs}. Additional technical results
required throughout the paper are reported in the Appendix \ref{sec.technicalResults}.

\section{Main result} \label{sec.mainResult}
\begin{thm}
\label{thm main res}Let $h_{m}^{g}\in\mathcal{M}^{g}$ denote an
$m\text{-component}$ location finite mixture PDF. If we assume that
$f$ and $g$ are PDFs, then the following statements are true.
\begin{lyxlist}{00.00.0000}
\item [{(a)}] If $f,g\in\mathcal{C}$ and $\mathbb{K}\subset\mathbb{R}^{n}$
is a compact set, then there exists a sequence $\left\{ h_{m}^{g}\right\} _{m=1}^{\infty}\subset\mathcal{M}^{g}$,
such that 
\[
\lim_{m\rightarrow\infty}\left\Vert f-h_{m}^{g}\right\Vert _{\mathcal{B}\left(\mathbb{K}\right)}=0\text{.}
\]

\item [{(b)}] For $p\in\left[1,\infty\right)$, if $f\in\mathcal{L}_{p}$
and $g\in\mathcal{L}_{\infty}$, then there exists a sequence $\left\{ h_{m}^{g}\right\} _{m=1}^{\infty}\subset\mathcal{M}^{g}$,
such that 
\[
\lim_{m\rightarrow\infty}\left\Vert f-h_{m}^{g}\right\Vert _{\mathcal{L}_{p}}=0\text{.}
\]
\end{lyxlist}
\end{thm}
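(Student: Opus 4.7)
The plan is to handle both statements with the same two-step template: first approximate $f$ by its convolution $f * g_\sigma$, where $g_\sigma(x) = \sigma^{-n} g(x/\sigma)$ is a dilation of $g$ acting as an approximate identity; then discretize the convolution integral by a Riemann sum to obtain a member of $\mathcal{M}^{g}$. The two parts differ only in the topology in which each step is executed, and the relaxations over Theorem \ref{thm last paper} (allowing $f \in \mathcal{C}$ in (a) and $g \in \mathcal{L}_\infty$ in (b), rather than $\mathcal{C}_b$ and $\mathcal{C}_0$, respectively) will be absorbed by truncating $f$ to a compactly supported proxy at the outset.

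For (a), the first step would be to reduce to a bounded, compactly supported target. I would fix a continuous cutoff $\phi_R$ equal to $1$ on a ball $B(0, R) \supset \mathbb{K}$ and vanishing outside $B(0, R+1)$, and set $\hat f = \phi_R f / \int \phi_R f$. Then $\hat f$ is a continuous PDF with compact support, and on $\mathbb{K}$ it agrees with $f$ up to the factor $1 / \int \phi_R f$, which tends to $1$ as $R \to \infty$; hence $\|\hat f - f\|_{\mathcal{B}(\mathbb{K})}$ can be made arbitrarily small. Since $\hat f$ is bounded and uniformly continuous on $\mathbb{R}^n$, the standard split of the convolution error into an inner $\delta$-ball (controlled by the modulus of continuity of $\hat f$) and an outer tail (bounded by $2 \|\hat f\|_{\mathcal{B}} \int_{\|y\| \ge \delta} g_\sigma$) yields $\hat f * g_\sigma \to \hat f$ uniformly on $\mathbb{R}^n$ as $\sigma \to 0$, using only that $g$ is a PDF. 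With $\sigma$ fixed, continuity of $g$ makes $(x, y) \mapsto g_\sigma(x - y)$ uniformly continuous on the compact set $\mathbb{K} \times \mathrm{supp}(\hat f)$, so partitioning $\mathrm{supp}(\hat f)$ into cubes $Q_i$ with centers $\mu_i$ yields a Riemann sum $\sum_i \hat f(\mu_i) |Q_i| g_\sigma(\cdot - \mu_i)$ converging to $\hat f * g_\sigma$ uniformly on $\mathbb{K}$. The nonnegative coefficients $c_i = \hat f(\mu_i) |Q_i|$ sum to $S \to 1$ as the partition is refined, so renormalizing to $\tilde c_i = c_i / S$ lands the weights in $\mathbb{S}^{m-1}$, at the cost of an additional error $|1/S - 1|$ times the uniform norm of the Riemann sum on $\mathbb{K}$.

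For (b), the same template works in $\mathcal{L}_p$. Since $g$ is a PDF, $\{g_\sigma\}$ is an approximate identity in $\mathcal{L}_p$ for every $p \in [1, \infty)$, so $\|f - f * g_\sigma\|_{\mathcal{L}_p} \to 0$. Truncating $f$ to $f_R = f \cdot \mathbf{1}_{B(0, R)}$ and applying Young's inequality, $\|(f - f_R) * g_\sigma\|_{\mathcal{L}_p} \le \|f - f_R\|_{\mathcal{L}_p} \|g_\sigma\|_{\mathcal{L}_1} = \|f - f_R\|_{\mathcal{L}_p}$, which vanishes as $R \to \infty$. Partitioning $B(0, R)$ into cubes $Q_i$ with centers $\mu_i$ and setting $c_i = \int_{Q_i} f$, the $\mathcal{L}_p$ discretization error of $\sum_i c_i g_\sigma(\cdot - \mu_i)$ against $f_R * g_\sigma$ is bounded by
\[
\sum_i \int_{Q_i} f(y) \, \bigl\| g_\sigma(\cdot - y) - g_\sigma(\cdot - \mu_i) \bigr\|_{\mathcal{L}_p} \, \mathrm{d}y,
\]
which vanishes as the cubes shrink because $g_\sigma \in \mathcal{L}_p$ has continuous translation in $\mathcal{L}_p$ for $p < \infty$. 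This is exactly where $g \in \mathcal{L}_\infty$ enters, since $g \in \mathcal{L}_1 \cap \mathcal{L}_\infty \subset \mathcal{L}_p$ by interpolation. The coefficients are nonnegative and sum to $S = \int_{B(0, R)} f \le 1$, so I would append one extra component $c_{m+1} \sigma_{m+1}^{-n} g((\cdot - \mu_{m+1})/\sigma_{m+1})$ with $c_{m+1} = 1 - S$; for $p > 1$ its $\mathcal{L}_p$ norm is $c_{m+1} \sigma_{m+1}^{-n(1 - 1/p)} \|g\|_{\mathcal{L}_p}$, which can be made arbitrarily small by taking $\sigma_{m+1}$ large, whereas for $p = 1$ the defect $c_{m+1} = 1 - S$ is instead forced to be small directly via the choice of $R$, using $\|f - f_R\|_{\mathcal{L}_1} \to 0$.

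The step I expect to require the most care is this mass-correction at $p = 1$: since $\mathcal{L}_1$ norms of nonnegative kernels are dilation-invariant, the ``spread it thin'' trick of taking $\sigma_{m+1}$ large does not help, and the defect must instead be absorbed through the truncation radius, coupling the choice of $R$ to the mass-correction step. The other pinch points, namely the uniform continuity of $(x, y) \mapsto g_\sigma(x - y)$ in (a) (ensured by $g \in \mathcal{C}$) and the $\mathcal{L}_p$-translation continuity of $g_\sigma$ in (b) (ensured by $g \in \mathcal{L}_\infty$), are each exactly where the relaxed regularity hypotheses on $g$ are deployed.
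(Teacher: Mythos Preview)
Your argument for part (a) follows the paper's approach closely: both truncate $f$ to a compactly supported continuous proxy, invoke the approximate-identity property of dilations of $g$, and then discretize the convolution by a Riemann sum using uniform continuity of $g$ on compacta. The paper handles the mass defect by leaving the proxy unnormalized ($\int h\le 1$) and appending a single extra component $c_m k_m^n g(k_m\cdot)$ with $k_m$ chosen small so that this term is uniformly negligible on the relevant ball; you instead normalize the proxy to a PDF up front and then renormalize the Riemann-sum weights. Both devices work and the overall structure is the same.

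For part (b), however, you take a genuinely different route. After the common first step ($g_\sigma\star f\to f$ in $\mathcal{L}_p$), the paper does \emph{not} discretize the convolution explicitly. Instead it argues by contradiction: if $g_k\star f$ were not in the $\mathcal{L}_p$-closure of $\mathcal{M}^g$, the second geometric form of the Hahn--Banach theorem would yield a separating functional, which by the Riesz representation is integration against some $u\in\mathcal{L}_q$; testing against the translates $g_k(\cdot-\mu)\in\mathcal{M}^g$ and applying Fubini to $\int u\,(g_k\star f)$ collapses the strict separation to $\alpha<\alpha$. Your approach is the constructive alternative: truncate $f$ to a ball, set $c_i=\int_{Q_i}f$ over a fine partition, and bound the discretization error via Minkowski's integral inequality together with continuity of translation in $\mathcal{L}_p$ for $p<\infty$---this last step being precisely where $g\in\mathcal{L}_1\cap\mathcal{L}_\infty\subset\mathcal{L}_p$ enters. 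Your method produces an explicit approximating mixture and avoids duality entirely; the paper's method is shorter once the functional-analytic lemmas are granted and sidesteps your $p=1$ mass-correction subtlety, since it never has to manufacture a finite sum whose weights land exactly on the simplex.
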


\section{Technical preliminaries}\label{sec.technicalPreliminaries}

Let $f,g\in\mathcal{L}_{1}$, and denote the convolution of $f$ and
$g$ by $f\star g=g\star f$. Further, we say that $g_{k}\left(\cdot\right)=k^{n}g\left(k\times\cdot\right)$
($k\in\mathbb{R}_{+}$) is a dilate of $g$. 

Notice that $\mathcal{M}_{m}^{g}$ can be parameterized via dilates.
That is, we can write
\[
\mathcal{M}_{m}^{g}=\left\{ h_{m}^{g}:h_{m}^{g}\left(\cdot\right)=\sum_{i=1}^{m}c_{i}k_{i}^{n}g\left(k_{i}\times\cdot-k_{i}\mu_{i}\right),\mu_{i}\in\mathbb{\mathbb{R}}^{n},k_{i}\in\mathbb{R}_{+},c\in\mathbb{S}^{m-1},i\in\left[m\right]\right\} \text{,}
\]
where $k_{i}=1/\sigma_{i}$.

Let $\mathbb{F}$ be a subset of $\mathbb{E}$, and denote the convex
hull of $\mathbb{F}$ by $\text{conv}\left(\mathbb{F}\right)$ is
the smallest convex subset in $\mathbb{E}$ that contains $\mathbb{F}$
(cf. \citealp[Chapter 1]{Brezis:2010aa}). By definition, we may write
\[
\text{conv}\left(\mathbb{F}\right)=\left\{ \sum_{i\in\left[m\right]}\alpha_{i}f_{i}:f_{i}\in\mathbb{F},\alpha\in\mathbb{S}^{m-1},i\in\left[m\right],m\in\mathbb{N}\right\} \text{,}
\]
where $\alpha^{\top}=\left(\alpha_{1},\dots,\alpha_{m}\right)$.

Define the class of ``basic'' densities, which will serve as the approximation building blocks, as follows
\[
\mathcal{G}^{g}=\left\{ k^{n}g\left(k\times\cdot-k\mu\right),\mu\in\mathbb{R}^{n},k\in\mathbb{R}_{+}\right\} \text{,}
\]
and suppose that we can choose a suitable NVS $\left(\mathbb{E},\left\Vert \cdot\right\Vert _{\mathbb{E}}\right)$,
such that $\mathcal{G}^{g}\subset\mathcal{M}^{g}\subset\mathbb{E}$.
Then, by definition, it holds that $\mathcal{M}^{g}$ is a convex hull of $\mathcal{G}^{g}$.

For $u\in\mathbb{E}$ and $r>0$, we define the open and closed balls
of radius $r$, centered around $u$, by:
\[
\mathbb{B}\left(u,r\right)=\left\{ v\in\mathbb{E}:\left\Vert u-v\right\Vert _{\mathbb{E}}<r\right\} \text{,}
\]
and 
\[
\overline{\mathbb{B}}\left(u,r\right)=\left\{ v\in\mathbb{E}:\left\Vert u-v\right\Vert _{\mathbb{E}}\le r\right\} \text{,}
\]
respectively. For brevity, we also write $\mathbb{B}_{r}=\mathbb{B}\left(0,r\right)$
and $\overline{\mathbb{B}}_{r}=\overline{\mathbb{B}}\left(0,r\right)$.
A set $\mathbb{F}\subset\mathbb{E}$ is open, if for every $u\in\mathbb{F}$,
there exists an $r>0$, such that $\mathbb{B}\left(u,r\right)\subset\mathbb{F}$.
We say that $\mathbb{F}$ is closed if its complement is open, and
by definition, we say that $\mathbb{E}$ and the empty set are both
closed and open.

We call the smallest closed set containing $\mathbb{F}$ its closure,
and we denote it by $\overline{\mathbb{F}}$. A sequence $\left\{ u_{m}\right\} \subset\mathbb{E}$
converges to $u\in\mathbb{E}$, if $\lim_{m\rightarrow\infty}\left\Vert u_{m}-u\right\Vert _{\mathbb{E}}=0$,
and we denote it symbolically by $\lim_{m\rightarrow\infty}u_{m}=u$.
That is, for every $\epsilon>0$, there exists an $N\left(\epsilon\right)\in\mathbb{N}$,
such that $m\ge N\left(\epsilon\right)$ implies that $\left\Vert u_{m}-u\right\Vert _{\mathbb{E}}<\epsilon$. 

By Lemma \ref{lem NVS}, we can write the closure of $\mathbb{F}$
as 
\[
\overline{\mathbb{F}}=\left\{ u\in\mathbb{E}:u=\lim_{m\rightarrow\infty}u_{m},u_{m}\in\mathbb{F}\right\} 
\]
and hence
\[
\overline{\mathcal{M}^{g}}=\left\{ h\in\mathbb{E}:h=\lim_{m\rightarrow\infty}h_{m}^{g},h_{m}^{g}\in\mathcal{M}^{g}\right\} \text{.}
\]
Thus, by definition, it holds that $\overline{\mathcal{M}^{g}}$ is
a closed and convex subset of $\mathbb{E}$.

If $f\in\mathcal{C}$ is a PDF on $\mathbb{R}^{n}$, we denote its
support by
\[
\text{supp}f=\left\{ x\in\mathbb{R}^{n}:f\left(x\right)\ne0\right\} 
\]
 and furthermore, we denote the set of compactly supported continuous
functions by
\[
\mathcal{C}_{c}=\left\{ f\in\mathcal{C}:\text{supp}f\text{ is compact}\right\} \text{.}
\]
For open sets $\mathbb{V}\subset\mathbb{R}^{n}$, we will write $f\prec\mathbb{V}$
as shorthand for $f\in\mathcal{C}_{c}$, $0\le f\le1$, and $\text{supp}f\subset\mathbb{V}$.

The following lemmas permit us to prove the primary technical mechanism that is used to prove our main result presented in Theorem \ref{thm main res}.

\begin{lem}
\label{lem compact approx}Let $f\in\mathcal{C}$ be a PDF. Then,
for every compact $\mathbb{K}\subset\mathbb{R}^{n}$, we can choose
$h\in\mathcal{C}_{c}$, such that $\mathrm{supp}\,h\subset\mathbb{B}_{r}$,
$0\le h\le f$, and $h=f$ on $\mathbb{K}$, for some $r\in\mathbb{R}_{+}$.
\end{lem}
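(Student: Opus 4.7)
The plan is to use a continuous cut-off (Urysohn-type) function and multiply by $f$. Since $\mathbb{K}$ is compact, it is bounded, so I can pick radii $0<\rho_{1}<\rho_{2}<r$ with $\mathbb{K}\subset\mathbb{B}_{\rho_{1}}$. I then construct an explicit piecewise-linear function $\phi:\mathbb{R}^{n}\to[0,1]$ depending only on $\|x\|_{2}$, namely $\phi(x)=1$ for $\|x\|_{2}\le\rho_{1}$, $\phi(x)=(\rho_{2}-\|x\|_{2})/(\rho_{2}-\rho_{1})$ for $\rho_{1}<\|x\|_{2}<\rho_{2}$, and $\phi(x)=0$ for $\|x\|_{2}\ge\rho_{2}$. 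This $\phi$ is continuous on $\mathbb{R}^{n}$, satisfies $0\le\phi\le1$, equals $1$ on $\mathbb{K}$, and vanishes outside $\overline{\mathbb{B}}_{\rho_{2}}\subset\mathbb{B}_{r}$.

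The candidate function is $h=\phi\cdot f$. Continuity of $h$ follows from continuity of $f$ and $\phi$. The inequality $0\le h\le f$ is immediate from $0\le\phi\le 1$ together with $f\ge 0$. On $\mathbb{K}$, we have $\phi\equiv 1$, so $h=f$ there. Finally, the set where $h$ is nonzero is contained in the set where $\phi$ is nonzero, which is contained in $\overline{\mathbb{B}}_{\rho_{2}}$, a compact subset of the open ball $\mathbb{B}_{r}$; hence $\mathrm{supp}\,h\subset\mathbb{B}_{r}$ and $h\in\mathcal{C}_{c}$.

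There is no real obstacle here — the only point that requires a bit of care is choosing the radii with strict inequalities so that the support of $h$ really sits inside the open ball $\mathbb{B}_{r}$ rather than just the closed ball, which is why I introduce the intermediate radius $\rho_{2}$ between $\rho_{1}$ (the radius enclosing $\mathbb{K}$) and $r$. The explicit piecewise-linear $\phi$ avoids invoking Urysohn's lemma, keeping the argument elementary and self-contained.
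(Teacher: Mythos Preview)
Your proof is correct and follows the same strategy as the paper: produce a continuous cut-off function that equals $1$ on $\mathbb{K}$ and is supported in an open ball, then set $h$ equal to that cut-off times $f$. The only difference is that the paper invokes the partition-of-unity lemma (Lemma~\ref{lem part unity}) to obtain the cut-off $u\prec\mathbb{B}_{r}$, whereas you give an explicit radial piecewise-linear construction, which makes the argument slightly more elementary and self-contained.
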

\begin{proof}
Since $\mathbb{K}$ is bounded, there exists some $r\in\mathbb{R}_{+}$,
such that $\mathbb{K}\subset\mathbb{B}_{r}$. Lemma \ref{lem part unity}
implies that there exists a function $u\prec\mathbb{B}_{r}$, such
that $u\left(x\right)=1$, for all $x\in\mathbb{K}$. We can then set $h=uf$
to obtain the desired result of Lemma \ref{lem compact approx}.
\end{proof}
\begin{lem}
\label{lem bounded convolution}Let $h\in\mathcal{C}_{c}$, such that
$\mathrm{supp}\,h\subset\mathbb{B}_{r}$, $0\le h$, and $\int h\text{d}\lambda\le1$,
and let $g\in\mathcal{C}$ be a PDF. Then, for any $k\in\mathbb{R}_{+}$,
there exists a sequence $\left\{ h_{m}^{g}\right\} _{m=1}^{\infty}\subset\mathcal{M}^{g}$,
so that
\begin{equation}
\lim_{m\rightarrow\infty}\left\Vert g_{k}\star h-h_{m}^{g}\right\Vert _{\mathcal{B}\left(\overline{\mathbb{B}}_{r}\right)}=0\text{.}\label{eq: convolution to mixture lemma part 1}
\end{equation}
Furthermore, if $g\in\mathcal{C}_{b}^{u}$, we have the stronger result
that
\begin{equation}
\lim_{m\rightarrow\infty}\left\Vert g_{k}\star h-h_{m}^{g}\right\Vert _{\mathcal{B}}=0\text{.}\label{eq: convolution to mixture lemma part 2}
\end{equation}
\end{lem}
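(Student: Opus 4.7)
The strategy is to realise $g_k\star h$ as a limit of midpoint Riemann sums and then to complete those sums into bona fide members of $\mathcal{M}^g$ by appending a ``padding'' family of components, broadly dispersed over a large cube, that restores the simplex constraint on the weights. Partition $[-r,r]^n\supset\mathrm{supp}\,h$ into cubes $R_i^{(m)}$ of side $1/m$ with centres $\mu_i^{(m)}$; set $c_i^{(m)}=h(\mu_i^{(m)})\,\lambda(R_i^{(m)})$ and
\[
S_m(x)=\sum_i c_i^{(m)}\,g_k\!\left(x-\mu_i^{(m)}\right).
\]
The representation $(g_k\star h)(x)=\int h(y)\,g_k(x-y)\,dy$, combined with the uniform continuity of $h$ on $\mathbb{B}_r$ and of $g$ on compact sets (or globally, when $g\in\mathcal{C}_b^u$), gives $\|S_m-g_k\star h\|_{\mathcal{B}(\overline{\mathbb{B}}_r)}\to 0$ in the setting of \eqref{eq: convolution to mixture lemma part 1}, and $\|S_m-g_k\star h\|_{\mathcal{B}}\to 0$ in the setting of \eqref{eq: convolution to mixture lemma part 2}.

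The principal obstacle is that $a_m:=\sum_i c_i^{(m)}\to a=\int h\le 1$ need not equal $1$, so $S_m$ lies only in the positive cone over $\mathcal{G}^g$, not necessarily in $\mathcal{M}^g$. Since $g$ is not assumed to vanish at infinity, the residual weight $1-a$ cannot be absorbed into a single far-away atom; instead, it must be dispersed. Choose $R_m\to\infty$ and a fine cubic mesh of side $\delta_m$ tiling $[-R_m,R_m]^n$, and assign to each of the $M_m=(2R_m/\delta_m)^n$ midpoints $\mu_j^*$ the common weight $\epsilon_m/M_m$, where $\epsilon_m=1-a_m\ge 0$ (nonnegativity being ensured by a harmless one-sided truncation of the $c_i^{(m)}$ that preserves $a_m\to a$). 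The padded term
\[
E_m(x)=\frac{\epsilon_m}{M_m}\sum_{j=1}^{M_m}g_k(x-\mu_j^*)
\]
is a Riemann-sum approximation of $\dfrac{\epsilon_m}{(2R_m)^n}\int_{[-R_m,R_m]^n}g_k(x-y)\,dy$, which, by $\int g_k=1$, is bounded above by $\epsilon_m/(2R_m)^n$ uniformly in $x\in\mathbb{R}^n$; the Riemann error is bounded by $\epsilon_m k^n\omega_g(k\delta_m\sqrt n)$, where $\omega_g$ is a modulus of continuity for $g$ on the relevant compact in part \eqref{eq: convolution to mixture lemma part 1}, or a global one under the hypothesis $g\in\mathcal{C}^u$ of part \eqref{eq: convolution to mixture lemma part 2}.

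Choosing first $R_m\to\infty$ and then $\delta_m\to 0$ fast enough that $k^n\omega_g(k\delta_m\sqrt n)\to 0$, both parts of the bound vanish, giving $\|E_m\|_{\mathcal{B}(K)}\to 0$ on $K=\overline{\mathbb{B}}_r$ (and on $K=\mathbb{R}^n$ under $g\in\mathcal{C}_b^u$). By construction $h_m^g:=S_m+E_m\in\mathcal{M}^g$, and the triangle inequality $\|g_k\star h-h_m^g\|_{\mathcal{B}(K)}\le \|g_k\star h-S_m\|_{\mathcal{B}(K)}+\|E_m\|_{\mathcal{B}(K)}\to 0$ yields \eqref{eq: convolution to mixture lemma part 1} and, under the stronger assumption $g\in\mathcal{C}_b^u$, \eqref{eq: convolution to mixture lemma part 2}.
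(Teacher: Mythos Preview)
Your argument is correct, and the Riemann-sum portion producing $S_m$ matches the paper's in spirit (the paper takes integral weights $c_i=\int_{A_i}h\,\mathrm{d}\lambda$ over the cells rather than midpoint evaluations, which automatically gives $\sum_i c_i\le\int h\le 1$ and removes the need for your truncation step, but this is cosmetic). The substantive difference lies in how the leftover mass $\epsilon_m=1-a_m$ is absorbed. You keep every component at the fixed dilation $k$ and spread the residual over a growing grid of $M_m$ atoms in $[-R_m,R_m]^n$, then kill the padding by a diagonal choice of $R_m\to\infty$ and $\delta_m\to 0$; in part~\eqref{eq: convolution to mixture lemma part 1} this forces you to track a modulus of continuity of $g$ on a compact that grows with $R_m$, which you handle correctly by choosing $\delta_m$ after $R_m$. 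The paper instead exploits the fact that $\mathcal{M}^g$ permits a \emph{different} scale for each component: it appends a \emph{single} extra term $c_m k_m^n g(k_m\,\cdot)$ at the origin with $k_m$ chosen so small that $k_m^n\to 0$ while $g(k_m\,\cdot)$ stays bounded on $\overline{\mathbb{B}}_r$ (or globally, when $g\in\mathcal{C}_b^u$), so the residual contributes at most $\epsilon/2$. Your construction thus proves a slightly sharper statement---approximation by mixtures all sharing the scale $k$---at the cost of a considerably longer padding argument; the paper's single flat component is the cleaner device once one remembers that the scales $\sigma_i$ in $\mathcal{M}^g$ are free.
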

\begin{proof}
It suffices to show that given any $r,k,\epsilon\in\mathbb{R}_{+}$,
there exists a sufficiently large $m\left(\epsilon,r,k\right)\in\mathbb{N}$ such that for all $m\ge m\left(\epsilon,r,k\right)$, there exists
a $h_{m}^{g}\in\mathcal{M}_{m}^{g}$ satisfying \begin{equation}
\left\Vert g_{k}\star h-h_{m}^{g}\right\Vert _{\mathcal{B}\left(\overline{\mathbb{B}}_{r}\right)}<\epsilon\text{.}\label{eq: convolution to mixture lemma 1}
\end{equation}

First, write 
\begin{eqnarray*}
\left(g_{k}\star h\right)\left(x\right) & = & \int g_{k}\left(x-y\right)h\left(y\right)\text{d}\lambda\left(y\right)=\int\mathbf{1}_{\left\{ y:y\in\overline{\mathbb{B}}_{r}\right\} }g_{k}\left(x-y\right)h\left(y\right)\text{d}\lambda\left(y\right)\\
 & = & \int\mathbf{1}_{\left\{ y:y\in\overline{\mathbb{B}}_{r}\right\} }k^{n}g\left(kx-ky\right)h\left(y\right)\text{d}\lambda\left(y\right)=\int\mathbf{1}_{\left\{ z:z\in\overline{\mathbb{B}}_{rk}\right\} }g\left(kx-z\right)h\left(\frac{z}{y}\right)\text{d}\lambda\left(z\right)\text{,}
\end{eqnarray*}
where $\overline{\mathbb{B}}_{rk}$ is a continuous image of a compact
set, and hence is also compact (cf. \citealp[Theorem 4.14]{Rudin:1976aa}).
By Lemma \ref{lem ball cover}, for any $\delta>0$, there exist $\kappa_{i}\in\mathbb{R}^{n}$
($i\in\left[m-1\right]$, for some $m\in\mathbb{N}$), such that $\overline{\mathbb{B}}_{rk}\subset\bigcup_{i=1}^{m-1}\mathbb{B}\left(\kappa_{i},\delta/2\right)$.
Further, if $\mathbb{B}_{i}^{\delta}=\mathbb{B}_{rk}^{\delta}=\overline{\mathbb{B}}_{rk}\cap\mathbb{B}\left(\kappa_{i},\delta/2\right)$,
then $\overline{\mathbb{B}}_{rk}=\bigcup_{i=1}^{m-1}\mathbb{B}_{i}^{\delta}$.
We can hence obtain a disjoint covering of $\overline{\mathbb{B}}_{rk}$
by taking $\mathbb{A}_{1}^{\delta}=\mathbb{B}_{1}^{\delta}$, and
$\mathbb{A}_{i}^{\delta}=\mathbb{B}_{i}^{\delta}\backslash\bigcup_{j=1}^{i-1}\mathbb{B}_{j}^{\delta}$
($i\in\left[m-1\right]$) (cf. \citealp[Chapter 24]{Cheney:2000aa}).
Notice that $\overline{\mathbb{B}}_{rk}=\bigcup_{i=1}^{m-1}\mathbb{A}_{i}^{\delta}$,
each $\mathbb{A}_{i}^{\delta}$ is a Borel set, and $\text{diam}\left(\mathbb{A}_{i}^{\delta}\right)\le\delta$,
by construction.

We shall denote the disjoint cover of $\overline{\mathbb{B}}_{rk}$
by $\Pi_{m}^{\delta}=\left\{ \mathbb{A}_{i}^{\delta}\right\} _{i=1}^{m-1}$.
We seek to show that there exists an $m\in\mathbb{N}$ and $\Pi_{m}^{\delta}$,
such that
\[
\left\Vert g_{k}\star h-\sum_{i=1}^{m}c_{i}k_{i}^{n}g\left(k_{i}x-z_{i}\right)\right\Vert _{\mathcal{B}\left(\overline{\mathbb{B}}_{r}\right)}<\epsilon\text{,}
\]
where $k_{i}=k$, $c_{i}=k^{-n}\int\mathbf{1}_{\left\{ z:z\in\mathbb{A}_{i}^{\delta}\right\} }h\left(z/k\right)\text{d}\lambda\left(z\right)$,
and $z_{i}\in\mathbb{A}_{i}^{\delta}$, for $i\in\left[m-1\right]$.
We then set $z_{m}=0$ and $c_{m}=1-\sum_{i=1}^{m-1}c_{i}$. Here,
$c_{m}$ depends only on $r$ and $\epsilon$. Suppose that $c_{m}>0$.
Then, since $g\ne0$, there exists some $s\in\mathbb{R}_{+}$ such
that $C_{s}=\sup_{w\in\overline{\mathbb{B}}_{s}}g\left(w\right)>0$.
We can choose

\[
k_{m}=\min\left\{ \frac{s}{r},\left(\frac{\epsilon}{2c_{m}C_{s}}\right)^{1/n}\right\} \text{,}
\]
so that $\left\Vert g\left(k_{m}\times\cdot\right)\right\Vert _{\mathcal{B}\left(\overline{\mathbb{B}}_{r}\right)}\le C_{s}$
and
\[
\left\Vert g\left(k_{m}\times\cdot\right)\right\Vert _{\mathcal{B}\left(\overline{\mathbb{B}}_{r}\right)}\le\frac{c_{m}\epsilon C_{s}}{2c_{m}C_{s}}=\epsilon/2\text{.}
\]
Moreover, if we assume that $g\in\mathcal{C}_{b}^{u}$, then there
exists a constant $C\in\left(0,\infty\right)$ such that $\left\Vert g\right\Vert _{\mathcal{B}}\le C$.
In this case, we can choose $k_{m}^{n}=\epsilon/\left(2c_{m}C\right)$
to obtain
\[
\left\Vert c_{m}k_{m}^{n}g\left(k_{m}\times\cdot-z_{m}\right)\right\Vert _{\mathcal{B}}\le\epsilon/2\text{.}
\]

Since $0\le h$ and $\int h\text{d}\lambda\in\left[0,1\right]$, the
sum $\sum_{i=1}^{m-1}c_{i}$ satisfies the inequalities:
\begin{eqnarray*}
0\le\sum_{i=1}^{m-1}c_{i} & = & k^{-n}\sum_{i=1}^{m-1}\int\mathbf{1}_{\left\{ z:z\in\mathbb{A}_{i}^{\delta}\right\} }h\left(\frac{z}{k}\right)\text{d}\lambda\left(z\right)\\
 & = & k^{-n}\int\mathbf{1}_{\left\{ z:z\in k\mathbb{K}\right\} }h\left(\frac{z}{k}\right)\text{d}\lambda\left(z\right)=\int\mathbf{1}_{\left\{ x:x\in\mathbb{K}\right\} }h\text{d}\lambda\le1\text{.}
\end{eqnarray*}
Thus, $c_{m}\in\left[0,1\right]$, and our construction of $h_{m}^{g}$ implies that
$h_{m}^{g}= \sum_{i=1}^{m}c_{i}k_{i}^{n}g\left(k_{i}x-z_{i}\right)\in\mathcal{M}_{m}^{g}$.

We can then bound the left-hand side of (\ref{eq: convolution to mixture lemma 1})
as follows:
\begin{align}
&\left\Vert g_{k}\star h-h_{m}^{g}\right\Vert _{\mathcal{B}\left(\overline{\mathbb{B}}_{r}\right)}\nonumber\\
\le & \left\Vert g_{k}\star h-\sum_{i=1}^{m-1}c_{i}k_{i}^{n}g\left(k_{i}\times\cdot-z_{i}\right)\right\Vert _{\mathcal{B}\left(\overline{\mathbb{B}}_{r}\right)}+\left\Vert c_{m}k_{m}^{n}g\left(k_{m}\times\cdot-z_{m}\right)\right\Vert _{\mathcal{B}\left(\overline{\mathbb{B}}_{r}\right)}\nonumber \\
\le & \left\Vert g_{k}\star h-\sum_{i=1}^{m-1}c_{i}k_{i}^{n}g\left(k_{i}\times\cdot-z_{i}\right)\right\Vert _{\mathcal{B}\left(\overline{\mathbb{B}}_{r}\right)}+\frac{\epsilon}{2}\nonumber \\
= & \left\Vert \int\text{\textbf{1}}_{\left\{ z:z\in\overline{\mathbb{B}}_{rk}\right\} }g\left(kx-z\right)h\left(\frac{z}{k}\right)\text{d}\lambda\left(z\right)-\sum_{i=1}^{m-1}\int\text{\textbf{1}}_{\left\{ z:z\in\mathbb{A}_{i}^{\delta}\right\} }g\left(kx-z\right)h\left(\frac{z}{k}\right)\text{d}\lambda\left(z\right)\right\Vert _{\mathcal{B}\left(\overline{\mathbb{B}}_{r}\right)}\nonumber \\ &
\quad +\frac{\epsilon}{2}\nonumber \\
\le & \sum_{i=1}^{m-1}\int\text{\textbf{1}}_{\left\{ z:z\in\mathbb{A}_{i}^{\delta}\right\} }\left|g\left(kx-z\right)-g\left(kx-z_{i}\right)\right|h\left(\frac{z}{k}\right)\text{d}\lambda\left(z\right)+\frac{\epsilon}{2}\text{.}\label{eq: convolution to mixture lemma 2}
\end{align}

Since $x\in\overline{\mathbb{B}}_{r}$, $z\in\mathbb{A}_{i}^{\delta}$,
and $z_{i}\in\overline{\mathbb{B}}_{rk}$, it holds that
$\left\Vert kx-z_{i}\right\Vert _{2}=\left\Vert kx-z\right\Vert _{2}\le2rk\text{,}$
and
\[
\left\Vert kx-z-\left(kx-z_{i}\right)\right\Vert _{2}=\left\Vert z-z_{i}\right\Vert _{2}\le\mathrm{diam}\left(\mathbb{A}_{i}^{\delta}\right)\le\delta\text{.}
\]
Note that $g\in\mathcal{C}$, and thus $g$ is uniformly continuous
on the compact set $\overline{\mathbb{B}}_{2rk}$, implying that

\[
\left|g\left(kx-z\right)-g\left(kx-z_{i}\right)\right|\le w\left(g,2rk,\delta\right)\text{,}
\]
for each $i\in\left[m-1\right]$, where
\[
w\left(g,r,\delta\right)=\sup\left\{ \left|g\left(x\right)-g\left(y\right)\right|:\left\Vert x-y\right\Vert _{2}\le\delta\text{ and }x,y\in\overline{\mathbb{B}}_{r}\right\} 
\]
denotes a modulus of continuity. Since $\lim_{\delta\rightarrow0}w\left(g,2rk,\delta\right)=0$
(cf. \citealp[Theorem 4.7.3]{Makarov:2013aa}), we may choose a $\delta\left(\epsilon,r,k\right)>0$,
such that
\[
w\left(g,2rk,\delta\left(\epsilon,r,k\right)\right)<\frac{\epsilon}{2k^{n}}\text{.}
\]

We then proceed from (\ref{eq: convolution to mixture lemma 2}) as
follows:
\begin{align}
\left\Vert g_{k}\star h-h_{m}^{g}\right\Vert _{\mathcal{B}\left(\overline{\mathbb{B}}_{r}\right)} & \le w\left(g,2rk,\delta\left(\epsilon,r,k\right)\right)\int\mathbf{1}_{\left\{ z:z\in\overline{\mathbb{B}}_{rk}\right\} }h\left(\frac{z}{k}\right)\text{d}\lambda\left(z\right)+\frac{\epsilon}{2}\nonumber \\
 & =w\left(g,2rk,\delta\left(\epsilon,r,k\right)\right)k^{n}\int h\text{d}\lambda+\frac{\epsilon}{2}\nonumber \\
 & \le w\left(g,2rk,\delta\left(\epsilon,r,k\right)\right)k^{n}+\frac{\epsilon}{2}<\frac{\epsilon}{2}+\frac{\epsilon}{2}=\epsilon\text{.}\label{eq: convolution to mixture lemma 3}
\end{align}
To conclude the proof of (\ref{eq: convolution to mixture lemma part 1}),
it suffices to choose an appropriate sequence of partitions $\Pi_{m}^{\delta\left(\epsilon,r,k\right)}$,
such that $m\ge m\left(\epsilon,r,k\right)$, for some sufficiently
large $m\left(\epsilon,r,k\right)$, so that (\ref{eq: convolution to mixture lemma 2})
and (\ref{eq: convolution to mixture lemma 3}) hold. This is possible
via Lemma \ref{lem ball cover}. When $g\in\mathcal{C}_{b}^{u}$, we notice that (\ref{eq: convolution to mixture lemma 2}) and (\ref{eq: convolution to mixture lemma 3})
both hold for all $x\in\mathbb{R}^{n}$. Thus, we have the stronger
result of (\ref{eq: convolution to mixture lemma part 2}).
\end{proof}
We present the primary tools for proving Theorem (\ref{thm main res})
in the following pair of lemma. The first one in Lemma \ref{lem main 1} permits the approximation
of convolutions of the form $g_{k}\star f$ in the $\mathcal{L}_{1}$
functional space, and the second presented in Lemma \ref{lem main p} generalizes this first result to the
spaces $\mathcal{L}_{p}$, where $p\in\left[1,\infty\right)$, under an essentially bounded assumption.
\begin{lem}
\label{lem main 1}If $f$ and $g$ are PDFs in the NVS $\left(\mathcal{L}_{1},\left\Vert \cdot\right\Vert _{\mathcal{L}_{1}}\right)$,
then $\mathcal{M}^{g}\subset\mathcal{L}_{1}$ and $g_{k}\star f\in\mathcal{L}_{1}$,
for every $k\in\mathbb{R}_{+}$. Furthermore, there exists a sequence
$\left\{ h_{m}^{g}\right\} _{m=1}^{\infty}\subset\mathcal{M}^{g}$, such that
\[
\lim_{m\rightarrow\infty}\left\Vert g_{k}\star f-h_{m}^{g}\right\Vert _{\mathcal{L}_{1}}=0\text{.}
\]
\end{lem}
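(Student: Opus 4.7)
The plan is to dispatch the membership claims by direct computation, then prove the approximation claim by combining density of $\mathcal{C}_c$ in $\mathcal{L}_1$ with a step-function approximation of $g_k \star \phi$ that rests on $\mathcal{L}_1$-continuity of translation. For $\mathcal{M}^g \subset \mathcal{L}_1$, each basic density $k^n g(k(\cdot - \mu))$ has $\mathcal{L}_1$-norm equal to $\|g\|_{\mathcal{L}_1} = 1$ by change of variables, so every finite convex combination has $\mathcal{L}_1$-norm at most $1$. For $g_k \star f \in \mathcal{L}_1$, nonnegativity of $f$ and $g$ combined with Fubini--Tonelli (equivalently, Young's convolution inequality) gives $\|g_k \star f\|_{\mathcal{L}_1} \le \|g_k\|_{\mathcal{L}_1}\|f\|_{\mathcal{L}_1} = 1$.

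For the approximation claim, I use an $\epsilon/2$-argument with two stages. First, since $\mathcal{C}_c$ is dense in $\mathcal{L}_1$ and $f \ge 0$, I choose a PDF $\phi \in \mathcal{C}_c$ with $\|f - \phi\|_{\mathcal{L}_1} < \epsilon/2$ (approximate $f$ by a nonnegative $\phi_0 \in \mathcal{C}_c$, then divide by $\int \phi_0 \, \text{d}\lambda$ to normalize; the normalization costs at most $\bigl|\int \phi_0 \, \text{d}\lambda - 1\bigr| \le \|f - \phi_0\|_{\mathcal{L}_1}$ extra $\mathcal{L}_1$-error). Young's convolution inequality then gives $\|g_k \star f - g_k \star \phi\|_{\mathcal{L}_1} < \epsilon/2$.

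Second, assume $\text{supp}\,\phi \subset \overline{\mathbb{B}}_R$, and reuse the disjoint Borel partitioning construction from the proof of Lemma \ref{lem bounded convolution} to obtain $\{\mathbb{A}_i^\delta\}_{i=1}^{N-1}$ covering $\overline{\mathbb{B}}_R$ with $\text{diam}(\mathbb{A}_i^\delta) \le \delta$, and pick representatives $y_i \in \mathbb{A}_i^\delta$. Setting $c_i = \int_{\mathbb{A}_i^\delta} \phi \, \text{d}\lambda \ge 0$ gives $\sum_i c_i = \int \phi\,\text{d}\lambda = 1$ exactly, so
\[
h_m^g := \sum_{i=1}^{N-1} c_i k^n g(k \cdot - k y_i) \in \mathcal{M}_{N-1}^g.
\]
Writing $(g_k \star \phi)(x) - h_m^g(x) = \sum_i \int_{\mathbb{A}_i^\delta}\phi(y)\bigl[g_k(x - y) - g_k(x - y_i)\bigr]\,\text{d}\lambda(y)$, taking $\mathcal{L}_1$-norms and interchanging order of integration (justified by nonnegativity of $\phi$ via Fubini--Tonelli) gives
\[
\|g_k \star \phi - h_m^g\|_{\mathcal{L}_1} \le \sum_{i=1}^{N-1}\int_{\mathbb{A}_i^\delta}\phi(y)\,\|g_k(\cdot - y) - g_k(\cdot - y_i)\|_{\mathcal{L}_1}\,\text{d}\lambda(y) \le \omega_{\mathcal{L}_1}(g_k, \delta),
\]
where $\omega_{\mathcal{L}_1}(g_k, \delta) := \sup_{\|u\|_2 \le \delta} \|g_k(\cdot - u) - g_k\|_{\mathcal{L}_1}$ and we used $\|y - y_i\|_2 \le \delta$ together with $\sum_i c_i = 1$. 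The $\mathcal{L}_1$-continuity of translations for $\mathcal{L}_1$ functions is a standard fact, hence $\omega_{\mathcal{L}_1}(g_k, \delta) \to 0$ as $\delta \to 0$; choosing $\delta$ small enough to push this below $\epsilon/2$ and applying the triangle inequality produces $\|g_k \star f - h_m^g\|_{\mathcal{L}_1} < \epsilon$. Letting $\epsilon \to 0$ extracts the required sequence.

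The main obstacle is precisely this $\mathcal{L}_1$-continuity-of-translation step: it is what lets the hypothesis $g \in \mathcal{L}_1$ (with no continuity) suffice, and it replaces the pointwise uniform-continuity estimate that drove Lemma \ref{lem bounded convolution}. Beyond this the argument is mostly bookkeeping; the key design choice is to weight the partition by $c_i = \int_{\mathbb{A}_i^\delta} \phi\,\text{d}\lambda$ rather than by $\phi(y_i)\lambda(\mathbb{A}_i^\delta)$, which makes $\sum_i c_i = 1$ automatically and obviates any separate compensating term or normalization.
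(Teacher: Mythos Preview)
Your proof is correct and takes a genuinely different route from the paper. The paper argues by contradiction via the Hahn--Banach separation theorem: assuming $g_k\star f\notin\overline{\mathcal{M}^g}$, it separates $\{g_k\star f\}$ from the closed convex set $\overline{\mathcal{M}^g}$ by a continuous linear functional on $\mathcal{L}_1$, represents that functional by some $u\in\mathcal{L}_\infty$ via Riesz, and then derives $\alpha<\alpha$ through Fubini and the fact that $f$ integrates to $1$. Your argument is instead fully constructive: you approximate $f$ in $\mathcal{L}_1$ by a compactly supported PDF $\phi$, discretize the convolution $g_k\star\phi$ as a finite mixture with weights $c_i=\int_{\mathbb{A}_i^\delta}\phi\,\mathrm{d}\lambda$, and control the error by the $\mathcal{L}_1$-modulus of continuity of translation for $g_k$.

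What each approach buys: the paper's method is slick and requires no explicit construction---it transfers unchanged to Lemma~\ref{lem main p} by swapping in the $\mathcal{L}_p$ Riesz representation, and it never needs to touch $f$ beyond $\int f=1$. Your method is more elementary (no Hahn--Banach, no duality), gives an explicit recipe for the approximants, and isolates exactly which analytic fact replaces the pointwise continuity used in Lemma~\ref{lem bounded convolution}, namely $\mathcal{L}_1$-continuity of translation. The normalization step for $\phi$ and the bound $\lVert\phi_0-\phi\rVert_{\mathcal{L}_1}=\lvert\int\phi_0-1\rvert\le\lVert f-\phi_0\rVert_{\mathcal{L}_1}$ are handled correctly; the choice $c_i=\int_{\mathbb{A}_i^\delta}\phi\,\mathrm{d}\lambda$ is indeed the clean one, since it forces $\sum_i c_i=1$ without any compensating term.
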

\begin{proof}
For any $k\in\mathbb{R}_{+}$, we can show that $g_{k}\in\mathcal{L}_{1}$,
since
\[
\left\Vert g_{k}\right\Vert _{\mathcal{L}_{1}}=\int g_{k}\text{d}\lambda=\int k^{n}g\left(kx\right)\text{d}\lambda\left(x\right)=\int g\text{d}\lambda=1\text{.}
\]
If $h_{m}^{g}\in\mathcal{M}_{m}^{g}$, then $h_{m}^{g}\in\mathcal{L}_{1}$,
since it is a finite sum of functions in $\mathcal{L}_{1}$, and thus,
$\mathcal{M}^{g}\subset\mathcal{L}_{1}$. Note that since $f$ is
a PDF, we have $f\in\mathcal{L}_{1}$, and by Lemma \ref{lem convolution},
we also have that $g_{k}\star f\in\mathcal{L}_{1}$. By Lemma \ref{lem fubini},
it then follows that
\begin{eqnarray*}
\left\Vert g_{k}\star f\right\Vert _{\mathcal{L}_{1}} & = & \int g_{k}\star f\text{d}\lambda\\
 & = & \int\left[\int g_{k}\left(x-y\right)f\left(y\right)\text{d}\lambda\left(y\right)\right]\text{d}\lambda\left(x\right)\\
 & = & \int\left[\int g_{k}\left(x-y\right)\text{d}\lambda\left(x\right)\right]f\left(y\right)\text{d}\lambda\left(y\right)\\
 & = & \left\Vert g_{k}\right\Vert _{\mathcal{L}_{1}}\left\Vert f\right\Vert _{\mathcal{L}_{1}}=1\text{}
\end{eqnarray*}

By definition of of the closure of $\mathcal{M}^{g}$ in $\mathcal{L}_{1}$,
it suffices to show that for any $k\in\mathbb{R}_{+}$, $g_{k}\star f\in\overline{\mathcal{M}^{g}}$.
We seek a contradiction by assuming that $g_{k}\star f\notin\overline{\mathcal{M}^{g}}$.
Then, we can choose $\mathbb{A}=\overline{\mathcal{M}^{g}}$ and $\mathbb{B}=\left\{ g_{k}\star f\right\} $
so that $\mathbb{A},\mathbb{B}\subset\mathcal{L}_{1}$ are nonempty
convex subsets, such that $\mathbb{A}\cap\mathbb{B}=\emptyset$. Furthermore,
$\mathbb{A}$ is closed and $\mathbb{B}$ is compact. By Lemma \ref{lem hahn banach},
there exists a continuous linear functional $\phi\in\mathcal{L}_{1}^{*}$,
such that $\phi\left(v\right)<\alpha<\phi\left(w\right)$, for all
$v\in\mathbb{A}$ and $w\in\mathbb{B}$. By definition of $\mathbb{B}$,
for all $v\in\overline{\mathcal{M}^{g}}\subset\mathcal{L}_{1}$ we
have
\[
\phi\left(v\right)<\alpha<\phi\left(g_{k}\star f\right)\text{.}
\]

By Lemma \ref{lem Riesz 1}, with $\phi\in\mathcal{L}_{1}^{*}$, there
exists a unique function $u\in\mathcal{L}_{\infty}$, such that, for
all $v\in\mathcal{L}_{1}$,
\[
\phi\left(v\right)=\int u\left(x\right)v\left(x\right)\text{d}\lambda\left(x\right)\text{.}
\]
If we let $v=g_{k}\left(\cdot-\mu\right)\in\overline{\mathcal{M}^{g}}\subset\mathcal{L}_{1}$,
then we obtain the inequalities
\[
\sup_{\mu\in\mathbb{R}^{n}}\int u\left(x\right)g_{k}\left(x-\mu\right)\text{d}\lambda\left(x\right)<\alpha<\int u\left(x\right)\left(g_{k}\star f\right)\left(x\right)\text{d}\lambda\left(x\right)\text{.}
\]
The left-hand inequality can be reduced as follows:
\begin{align*}
\alpha & <\int u\left(x\right)\left(g_{k}\star f\right)\left(x\right)\text{d}\lambda\left(x\right)\\
 & =\int u\left(x\right)\left[\int g_{k}\left(x-\mu\right)f\left(\mu\right)\text{d}\lambda\left(\mu\right)\right]\text{d}\lambda\left(x\right)\\
 & =\int f\left(\mu\right)\left[\int u\left(x\right)g_{k}\left(x-\mu\right)\text{d}\lambda\left(x\right)\right]\text{d}\lambda\left(\mu\right)\\
 & <\alpha\int f\left(\mu\right)\text{d}\lambda\left(\mu\right)=\alpha\text{,}
\end{align*}
where the third line is due to Lemma \ref{lem fubini} and the final
equality is because $f$ is a PDF. This yields the sought contradiction.
\end{proof}
\begin{lem}
\label{lem main p}If $f,g\in\mathcal{L}_{\infty}$ are PDFs in the
NVS $\left(\mathcal{L}_{\infty},\left\Vert \cdot\right\Vert _{\mathcal{L}_{p}}\right)$,
for $p\in\left[1,\infty\right)$, then, $\mathcal{M}^{g}\subset\mathcal{L}_{p}$
and $g_{k}\star f\in\mathcal{L}_{p}$, for any $k\in\mathbb{R}_{+}$.
Furthermore, there exists a sequence $\left\{ h_{m}^{g}\right\} _{m=1}^{\infty}\subset\mathcal{M}^{g}$,
such that
\[
\lim_{m\rightarrow\infty}\left\Vert g_{k}\star f-h_{m}^{g}\right\Vert _{\mathcal{L}_{p}}=0\text{.}
\]
\end{lem}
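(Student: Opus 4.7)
The plan is to mirror the Hahn--Banach separation argument of Lemma \ref{lem main 1}, but now with the $(\mathcal{L}_{p},\mathcal{L}_{q})$ duality, where $q$ is the conjugate exponent of $p$ (so $q=\infty$ if $p=1$ and $q=p/(p-1)$ otherwise). The extra essential boundedness hypothesis on $g$, together with $f$ being a PDF (hence in $\mathcal{L}_{1}$), is precisely what is needed to push the membership statements $\mathcal{M}^{g}\subset\mathcal{L}_{p}$ and $g_{k}\star f\in\mathcal{L}_{p}$ through while preserving the structure of the contradiction.

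First I would verify the preliminary inclusions. Since $g$ is a PDF with $g\in\mathcal{L}_{\infty}$, the interpolation estimate $\int |g|^{p}\text{d}\lambda\le\|g\|_{\mathcal{L}_{\infty}}^{p-1}\int g\,\text{d}\lambda=\|g\|_{\mathcal{L}_{\infty}}^{p-1}$ gives $g\in\mathcal{L}_{p}$. A change of variables then yields $\|g_{k}(\cdot-\mu)\|_{\mathcal{L}_{p}}=k^{n(p-1)/p}\|g\|_{\mathcal{L}_{p}}$ for every $\mu\in\mathbb{R}^{n}$, so each $h_{m}^{g}\in\mathcal{M}^{g}$ is a finite sum of $\mathcal{L}_{p}$ elements and $\mathcal{M}^{g}\subset\mathcal{L}_{p}$. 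The same interpolation shows $f\in\mathcal{L}_{p}$, and Young's convolution inequality applied to $g_{k}\in\mathcal{L}_{1}$ and $f\in\mathcal{L}_{p}$ gives $g_{k}\star f\in\mathcal{L}_{p}$ with $\|g_{k}\star f\|_{\mathcal{L}_{p}}\le\|g_{k}\|_{\mathcal{L}_{1}}\|f\|_{\mathcal{L}_{p}}=\|f\|_{\mathcal{L}_{p}}$.

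For the approximation conclusion, I would argue by contradiction: if $g_{k}\star f\notin\overline{\mathcal{M}^{g}}$, where the closure is now taken in $(\mathcal{L}_{p},\|\cdot\|_{\mathcal{L}_{p}})$, then Lemma \ref{lem hahn banach} applied to $\mathbb{A}=\overline{\mathcal{M}^{g}}$ and $\mathbb{B}=\{g_{k}\star f\}$ produces a continuous linear functional $\phi\in\mathcal{L}_{p}^{*}$ and a scalar $\alpha\in\mathbb{R}$ separating them. The Riesz representation theorem (the $\mathcal{L}_{p}$ analogue of Lemma \ref{lem Riesz 1}, available for all $p\in[1,\infty)$) furnishes a unique $u\in\mathcal{L}_{q}$ with $\phi(v)=\int uv\,\text{d}\lambda$. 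Testing against the translates $v=g_{k}(\cdot-\mu)\in\overline{\mathcal{M}^{g}}$ yields $\int u(x)g_{k}(x-\mu)\,\text{d}\lambda(x)<\alpha$ uniformly in $\mu\in\mathbb{R}^{n}$, while $\int u(x)(g_{k}\star f)(x)\,\text{d}\lambda(x)>\alpha$. Unfolding the convolution and swapping the order of integration then gives
\[
\alpha<\int f(\mu)\left[\int u(x)g_{k}(x-\mu)\,\text{d}\lambda(x)\right]\text{d}\lambda(\mu)\le\alpha\int f\,\text{d}\lambda=\alpha,
\]
the desired contradiction.

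The one technical hurdle I anticipate is justifying the Fubini swap (Lemma \ref{lem fubini}), since the integrand $u(x)g_{k}(x-\mu)f(\mu)$ is not necessarily nonnegative. Absolute integrability is secured by a single application of H\"older's inequality: the inner $x$-integral satisfies $\int|u(x)|g_{k}(x-\mu)\,\text{d}\lambda(x)\le\|u\|_{\mathcal{L}_{q}}\|g_{k}(\cdot-\mu)\|_{\mathcal{L}_{p}}=\|u\|_{\mathcal{L}_{q}}\|g_{k}\|_{\mathcal{L}_{p}}$, a constant independent of $\mu$, so the joint integral is dominated by $\|u\|_{\mathcal{L}_{q}}\|g_{k}\|_{\mathcal{L}_{p}}\|f\|_{\mathcal{L}_{1}}<\infty$. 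Once this is noted, everything else is essentially cosmetic rewriting of the $p=1$ proof.
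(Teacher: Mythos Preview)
Your argument is correct and follows essentially the same route as the paper: membership in $\mathcal{L}_{p}$ via the $\mathcal{L}_{1}\cap\mathcal{L}_{\infty}\subset\mathcal{L}_{p}$ interpolation (the paper's Lemma \ref{lem inclusion}), $g_{k}\star f\in\mathcal{L}_{p}$ via Young's inequality (Lemma \ref{lem convolution}), and then the Hahn--Banach/Riesz separation contradiction lifted verbatim from the $p=1$ case, now using Lemma \ref{lem Riesz p} in place of Lemma \ref{lem Riesz 1}. Your explicit H\"older bound to justify the Fubini swap is a detail the paper leaves implicit but is exactly what is needed.
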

\begin{proof}
We obtain the result for $p=1$ via Lemma \ref{lem main 1}. Otherwise,
since $g\in\mathcal{L}_{1}\cap\mathcal{L}_{\infty}$, we know that
$g\in\mathcal{L}_{p}$ and $g_{k}\in\mathcal{L}_{p}$, for each $k\in\mathbb{R}_{+}$,
via Lemma \ref{lem inclusion}. For any $h_{m}^{g}\in\mathcal{M}_{m}^{g}$,
we then have $h_{m}^{g}\in\mathcal{L}_{p}$ via finite summation,
and hence $\mathcal{M}^{g}\in\mathcal{L}_{p}$. Since $f\in\mathcal{L}_{1}$,
Lemma \ref{lem convolution} implies that $g_{k}\star f\in\mathcal{L}_{p}$.
By definition of the closure of $\mathcal{M}^{g},$ it suffices to
show that $g_{k}\star f\in\overline{\mathcal{M}^{g}}$, for any $k\in\mathbb{R}_{+}$.
This can be achieved by seeking a contradiction under the assumption
that $g_{k}\star f\notin\overline{\mathcal{M}^{g}}$ and using Lemma
\ref{lem Riesz p} in the same manner as Lemma \ref{lem Riesz 1}
is used in the proof of Lemma \ref{lem main 1}.
\end{proof}

\section{Proof of main result}\label{sec.proofs}

\subsection{Proof of Theorem \ref{thm main res} (a)}
To prove the statement (a) of Theorem \ref{thm main res}, it suffices to show that there exists a sufficiently large $m\left(\epsilon,\mathbb{K}\right)\in\mathbb{N}$,
such that for all $m\ge m\left(\epsilon,\mathbb{K}\right)$, there
exists a $h_{m}^{g}\in\mathcal{M}_{m}^{g}$, such that $\left\Vert f-h_{m}^{g}\right\Vert _{\mathcal{B}\left(\mathbb{K}\right)}<\epsilon$,
for any $\epsilon>0$ and compact set $\mathbb{K}\subset\mathbb{R}^{n}$.

First, Lemma \ref{lem compact approx} implies that we can choose
a $h\in\mathcal{C}_{c}$, such that $\mathrm{supp}\:h\subset\overline{\mathbb{B}}_{r}$,
$0\le h\le f$, and $h=f$ on $\mathbb{K}$, for some $r>0$, where
$\mathbb{K}\subset\overline{\mathbb{B}}_{r}$. We then have $\left\Vert f-h\right\Vert _{\mathcal{B}\left(\mathbb{K}\right)}=0$.

Since $h\in\mathcal{C}_{c}\subset\mathcal{C}_{b}^{u}$, Lemma \ref{lem approximate identities} and Corollary \ref{cor dilation}
then imply that there exists a $k\left(\epsilon\right)\in\mathbb{R}_{+}$,
such that for all $k\ge k\left(\epsilon\right)$, $\left\Vert h-g_{k}\star h\right\Vert _{\mathcal{B\left(\mathbb{K}\right)}}<\epsilon/2$.
We shall assume that $k\ge k\left(\epsilon\right)$, from hereon in.

Lemma \ref{lem bounded convolution} then implies that there exists
an $m\left(\epsilon,r,k\right)\in\mathbb{N}$, such that for any $m\ge m\left(\epsilon,r,k\right)$,
there exists a $h_{m}^{g}\in\mathcal{M}_{m}^{g}$, such that $\left\Vert g_{k}\star h-h_{m}^{g}\right\Vert _{\mathcal{B}\left(\mathbb{K}\right)}<\left\Vert g_{k}\star h-h_{m}^{g}\right\Vert _{\mathcal{B}\left(\overline{\mathbb{B}}_{r}\right)}<\epsilon/2$.
The triangle inequality then completes the proof.

\subsection{Proof of Theorem \ref{thm main res} (b)}

To prove the statement (a) of Theorem \ref{thm main res}, it suffices to show that there exists a sufficiently large $m\left(\epsilon\right)\in\mathbb{N}$,
such that for all $m\ge m\left(\epsilon\right)$, there exists a $h_{m}^{g}\in\mathcal{M}_{m}^{g}$,
such that $\left\Vert f-h_{m}^{g}\right\Vert _{\mathcal{L}_{p}}<\epsilon$,
for any $\epsilon>0$.

First, Lemma \ref{lem approximate identities} and Corollary \ref{cor dilation}
imply that there exists a $k\left(\epsilon\right)\in\mathbb{R}_{+}$,
such that for any $k\ge k\left(\epsilon\right)$, it follows that
$\left\Vert f-g_{k}\star f\right\Vert _{\mathcal{L}_{p}}<\epsilon/2$.
We shall assume $k\ge k\left(\epsilon\right)$, from hereon in.

Lemmas \ref{lem main 1} and \ref{lem main p} imply that there exists
an $m\left(\epsilon\right)\in\mathbb{N}$, such that for all $m\ge m\left(\epsilon\right)$,
there exists a $h_{m}^{g}\in\mathcal{M}_{m}^{g}$, such that $\left\Vert g_{k}\star f-h_{m}^{g}\right\Vert _{\mathcal{L}_{p}}<\epsilon/2$.
The triangle inequality then completes the proof.

\appendix
\section{Technical results} \label{sec.technicalResults}

We state a number of technical results that are used throughout the
main text, in this Appendix. Sources for unproved results are provided
at the end of the section.
\begin{lem}
\label{lem approximate identities}Let $\left\{ g_{k}\right\} $ be
a sequence of PDFs in $\mathcal{L}_{1}$, such that for every $\delta>0$,
\[
\lim_{k\rightarrow\infty}\int\mathbf{1}_{\left\{ x:\left\Vert x\right\Vert _{2}>\delta\right\} }g_{k}\text{d}\lambda=0\text{.}
\]
Then, for $f\in\mathcal{L}_{p}$ and $p\in\left[1,\infty\right)$,
\[
\lim_{k\rightarrow\infty}\left\Vert g_{k}\star f-f\right\Vert _{\mathcal{L}_{p}}=0\text{.}
\]
Furthermore, for $f\in\mathcal{C}_{b}$ and compact $\mathbb{K}\subset\mathbb{R}^{n}$,

\[
\lim_{k\rightarrow\infty}\left\Vert g_{k}\star f-f\right\Vert _{\mathcal{L}_{\infty}\left(\mathbb{K}\right)}=0\text{.}
\]
\end{lem}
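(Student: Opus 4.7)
The plan is to prove this classical approximate-identity result by exploiting the fact that each $g_k$ is a PDF, so $\int g_k \, d\lambda = 1$, which lets me write
\[
(g_k \star f)(x) - f(x) = \int g_k(y)\bigl[f(x-y) - f(x)\bigr]\, d\lambda(y).
\]
The concentration hypothesis on $\{g_k\}$ then lets me split the $y$-integral into a ``near'' part $\{\|y\|_2 \le \delta\}$ (controlled by continuity of $f$ in an appropriate sense) and a ``far'' part $\{\|y\|_2 > \delta\}$ (controlled by the hypothesis, since $g_k$ concentrates near the origin).

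For the $\mathcal{L}_p$ statement, I would apply Minkowski's integral inequality to obtain
\[
\|g_k \star f - f\|_{\mathcal{L}_p} \le \int g_k(y)\, \|\tau_y f - f\|_{\mathcal{L}_p}\, d\lambda(y),
\]
where $\tau_y f(\cdot) = f(\cdot - y)$. Splitting as above, the far part is bounded by $2\|f\|_{\mathcal{L}_p} \int_{\{\|y\|_2 > \delta\}} g_k\, d\lambda$, which tends to $0$ as $k \to \infty$ by hypothesis. For the near part I would invoke continuity of translation in $\mathcal{L}_p$, namely $\lim_{\|y\|_2 \to 0} \|\tau_y f - f\|_{\mathcal{L}_p} = 0$, so that the near part is at most $\sup_{\|y\|_2 \le \delta} \|\tau_y f - f\|_{\mathcal{L}_p}$, which can be made arbitrarily small by choosing $\delta$ small first, then $k$ large.

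For the uniform-on-compact statement, for any $x \in \mathbb{K}$ the pointwise identity gives
\[
|(g_k \star f)(x) - f(x)| \le \int g_k(y)\, |f(x-y) - f(x)|\, d\lambda(y).
\]
The far part is uniformly (in $x$) bounded by $2\|f\|_{\mathcal{B}} \int_{\{\|y\|_2 > \delta\}} g_k\, d\lambda$. For the near part, observe that for $x \in \mathbb{K}$ and $\|y\|_2 \le \delta$, both $x$ and $x - y$ lie in the compact enlargement $\mathbb{K} + \overline{\mathbb{B}}_\delta$, on which the continuous $f$ is uniformly continuous. Hence $|f(x-y) - f(x)|$ is bounded by a modulus of continuity $w(f, \mathbb{K} + \overline{\mathbb{B}}_\delta, \delta)$ that tends to $0$ as $\delta \to 0$ and is independent of $x \in \mathbb{K}$. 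The standard two-step choice of $\delta$ then $k$ completes the estimate.

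The main obstacle I expect is justifying $\mathcal{L}_p$-continuity of translation under only the hypothesis $f \in \mathcal{L}_p$. The standard route is to first establish it for $f \in \mathcal{C}_c$ using uniform continuity together with the dominated convergence theorem applied on the compact support, and then extend to arbitrary $f \in \mathcal{L}_p$ by density of $\mathcal{C}_c$ in $\mathcal{L}_p$ (valid precisely when $p \in [1, \infty)$, which is why this regime is excluded from the uniform statement) via a three-$\varepsilon$ approximation. A secondary technical point is the measurability required for Minkowski's integral inequality to apply to $(x,y) \mapsto g_k(y) f(x - y)$, which is handled by the Fubini-type result already listed as \textbf{Lemma \ref{lem fubini}} in the appendix.
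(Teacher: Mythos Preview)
Your proof is correct and is precisely the standard approximate-identity argument. However, the paper does not supply its own proof of this lemma: it is listed in the appendix among the ``Technical results'' that are merely quoted, with the source given as \emph{Makarov et al.}\ (Theorem~9.3.3) and \emph{Cheney} (Chapter~20, Theorem~2). So there is no paper-proof to compare against; what you have written is essentially the argument found in those cited textbooks (Minkowski's integral inequality combined with $\mathcal{L}_p$-continuity of translation for the first part, and uniform continuity on a compact enlargement for the second).
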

The sequences $\left\{ g_{k}\right\} $ of Lemma \ref{lem approximate identities}
are often referred to as approximate identities or approximations
of identity (cf. \citealp[Sec. 7.6]{Makarov:2013aa}). A typical construction
of approximate identities is to consider the sequence of dilations,
of the form: $g_{k}\left(\cdot\right)=k^{n}g\left(k\times\cdot\right)$,
which permits the following corollary.
\begin{cor}
\label{cor dilation}Let $g$ be a PDF. Then, the sequence $\left\{ g_{k}:g_{k}\left(\cdot\right)=k^{n}g\left(k\times\cdot\right)\right\} $
satisfies the hypothesis of Lemma \ref{lem approximate identities}
and hence permits its conclusion.
\end{cor}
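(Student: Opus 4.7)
The plan is to verify the two ingredients required by the hypothesis of Lemma~\ref{lem approximate identities}: first, that each dilate $g_k$ is itself a PDF in $\mathcal{L}_1$; and second, that the tail mass of $g_k$ outside any fixed ball $\mathbb{B}_\delta$ vanishes as $k\to\infty$. Both should fall out of a clean change of variables $y=kx$ (Jacobian $k^{-n}$), combined with the fact that $g$ is a PDF.

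For the first ingredient I would note that $g_k(x)=k^n g(kx)\ge 0$ since $g\ge 0$, and compute
\[
\int g_k\,\mathrm{d}\lambda=\int k^n g(kx)\,\mathrm{d}\lambda(x)=\int g(y)\,\mathrm{d}\lambda(y)=1,
\]
so $g_k$ is a PDF in $\mathcal{L}_1$. For the tail condition, I would fix $\delta>0$ and apply the same substitution:
\[
\int\mathbf{1}_{\{x:\|x\|_2>\delta\}}g_k\,\mathrm{d}\lambda
=\int\mathbf{1}_{\{x:\|x\|_2>\delta\}}k^n g(kx)\,\mathrm{d}\lambda(x)
=\int\mathbf{1}_{\{y:\|y\|_2>k\delta\}}g(y)\,\mathrm{d}\lambda(y).
\]
As $k\to\infty$, the indicator $\mathbf{1}_{\{\|y\|_2>k\delta\}}$ decreases pointwise to $0$, and it is dominated by the integrable $g$. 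Either dominated convergence or monotone convergence applied to the decreasing sequence $\{g\cdot\mathbf{1}_{\{\|y\|_2>k\delta\}}\}$ yields that the right-hand side converges to $0$, which is the required tail-decay hypothesis of Lemma~\ref{lem approximate identities}.

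There is no real obstacle here; the argument is a textbook verification. The only mild subtlety worth being explicit about is the justification of passing to the limit inside the integral, where one must observe that the tail integral $\int\mathbf{1}_{\{\|y\|_2>R\}}g\,\mathrm{d}\lambda$ tends to $0$ as $R\to\infty$ whenever $g\in\mathcal{L}_1$. Once both ingredients are in hand, applying Lemma~\ref{lem approximate identities} directly gives the desired conclusion, both in the $\mathcal{L}_p$ sense for $f\in\mathcal{L}_p$, $p\in[1,\infty)$, and in the uniform-on-compacta sense for $f\in\mathcal{C}_b$.
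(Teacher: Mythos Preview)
Your proof is correct and is exactly the standard verification. The paper itself does not supply a proof of this corollary; it merely cites \citet[Ch.~20, Thm.~4]{Cheney:2000aa}, whose argument is precisely the change of variables $y=kx$ that you carry out, together with the observation that the tail integral of an $\mathcal{L}_1$ function vanishes as the radius grows.
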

\begin{lem}
\label{lem NVS}Let $\left(\mathbb{E},\left\Vert \cdot\right\Vert _{\mathbb{E}}\right)$
be an NVS, and let $\mathbb{F}\subset\mathbb{E}$ and $u\in\mathbb{E}$.
Then the following statements are equivalent: (a) $u\in\overline{\mathbb{F}}$;
(b) $\mathbb{B}\left(u,r\right)\cap\mathbb{F}\ne\emptyset$, for all
$r>0$; and (c) there exists a sequence $\left\{ u_{m}\right\} \subset\mathbb{F}$
that converges to $u$.
\end{lem}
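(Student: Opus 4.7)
The plan is to prove the equivalence via the cyclic chain of implications (a)$\Rightarrow$(b)$\Rightarrow$(c)$\Rightarrow$(a), relying only on the definitions of open, closed, and closure given in the preceding section, and on the standard fact that the complement of an open ball is closed.

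For (a)$\Rightarrow$(b), I would argue by contradiction. Suppose $u\in\overline{\mathbb{F}}$ but there exists some $r>0$ with $\mathbb{B}(u,r)\cap\mathbb{F}=\emptyset$. Then $\mathbb{F}\subset\mathbb{E}\setminus\mathbb{B}(u,r)$. Since $\mathbb{B}(u,r)$ is open by definition, its complement is closed, so $\mathbb{E}\setminus\mathbb{B}(u,r)$ is a closed set containing $\mathbb{F}$. By minimality of the closure, $\overline{\mathbb{F}}\subset\mathbb{E}\setminus\mathbb{B}(u,r)$, hence $u\notin\mathbb{B}(u,r)$, contradicting the trivial fact that $\|u-u\|_{\mathbb{E}}=0<r$.

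For (b)$\Rightarrow$(c), I would construct the sequence explicitly. For each $m\in\mathbb{N}$, the hypothesis applied with $r=1/m$ yields a point $u_m\in\mathbb{B}(u,1/m)\cap\mathbb{F}$; by construction $\{u_m\}\subset\mathbb{F}$ and $\|u_m-u\|_{\mathbb{E}}<1/m$, so $\lim_{m\to\infty}u_m=u$ directly from the definition of convergence in the NVS. For (c)$\Rightarrow$(a), I would again argue by contradiction: assume $\{u_m\}\subset\mathbb{F}$ converges to $u$ but $u\notin\overline{\mathbb{F}}$. Then $u$ lies in $\mathbb{E}\setminus\overline{\mathbb{F}}$, which is open (as the complement of a closed set), so some $\mathbb{B}(u,r)$ is contained in $\mathbb{E}\setminus\overline{\mathbb{F}}$. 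Convergence then forces $u_m\in\mathbb{B}(u,r)$ for all sufficiently large $m$, contradicting $u_m\in\mathbb{F}\subset\overline{\mathbb{F}}$.

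There is no real obstacle in this proof: every step is a direct unpacking of the definitions that were explicitly laid out earlier in Section~\ref{sec.technicalPreliminaries}. The only point requiring a little care is the use of the minimality clause in the definition of $\overline{\mathbb{F}}$ in the (a)$\Rightarrow$(b) step, which is what lets us pass from ``$\mathbb{F}$ is contained in a closed set'' to ``$\overline{\mathbb{F}}$ is contained in that closed set'' without having to reconstruct the closure explicitly.
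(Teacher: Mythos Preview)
Your proof is correct and entirely standard; each implication follows directly from the definitions of open sets, closed sets, closure, and convergence as set up in Section~\ref{sec.technicalPreliminaries}. Note, however, that the paper does not actually supply its own proof of this lemma: it is listed among the unproved technical results in the Appendix and attributed to Proposition~0.22 of \citet{Folland:1999aa}, so there is no in-paper argument to compare against. Your cyclic (a)$\Rightarrow$(b)$\Rightarrow$(c)$\Rightarrow$(a) argument is precisely the textbook proof one would find in such a reference.
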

Let $\mathbb{E}$ be a locally convex linear topological space over
$\mathbb{R}$ and recall that a functional is a function defined on
$\mathbb{E}$ (or some subspace of $\mathbb{E}$), with values in
$\mathbb{R}$. We denote the due space of $\mathbb{E}$ (the space
of all continuous linear functions on $\mathbb{E}$) by $\mathbb{E}^{*}$.
\begin{lem}
[Second geometric form of the Hahn-Banach theorem]\label{lem hahn banach}Let
$\mathbb{A},\mathbb{B}\subset\mathbb{E}$ be two nonempty convex subsets,
such that $\mathbb{A}\cap\mathbb{B}\ne\emptyset$. Assume that $\mathbb{A}$
is closed and that $\mathbb{B}$ is compact. Then, there exists a
continuous linear functional $\phi\in\mathbb{E}^{*}$, such that its
corresponding hyperplane $H=\left\{ u\in\mathbb{E}:\phi\left(u\right)=\alpha\right\} $
($\alpha\in\mathbb{R}$) strictly separates $\mathbb{A}$ and $\mathbb{B}$.
That is, there exists some $\epsilon>0$, such that $\phi\left(u\right)\le\alpha-\epsilon$
and $\phi\left(v\right)\ge\alpha+\epsilon$, for all $u\in\mathbb{A}$
and $v\in\mathbb{B}$. Or, in other words, $\sup_{u\in\mathbb{A}}\phi\left(u\right)<\inf_{v\in\mathbb{B}}\phi\left(v\right)$. 
\end{lem}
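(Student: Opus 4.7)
The plan is to reduce this second geometric form of Hahn--Banach to the first geometric form (non-strict separation of two disjoint non-empty convex sets, one of which is open, by a continuous linear functional), by inflating both $\mathbb{A}$ and $\mathbb{B}$ by a common small open ball. The key estimate is that
\[
d := \inf\{\|u - v\|_{\mathbb{E}} : u \in \mathbb{A},\, v \in \mathbb{B}\} > 0.
\]
Indeed, the map $v \mapsto \mathrm{dist}(v, \mathbb{A}) := \inf_{u \in \mathbb{A}}\|u - v\|_{\mathbb{E}}$ is $1$-Lipschitz and hence continuous on $\mathbb{E}$, so it attains its infimum on the compact set $\mathbb{B}$ at some $v^{\star}$; were that infimum zero then $v^{\star} \in \overline{\mathbb{A}} = \mathbb{A}$ (using that $\mathbb{A}$ is closed), contradicting $\mathbb{A} \cap \mathbb{B} = \emptyset$.

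Next, I would fix $\epsilon_{0} = d/3 > 0$ and define $\mathbb{A}' = \mathbb{A} + \mathbb{B}_{\epsilon_{0}}$ and $\mathbb{B}' = \mathbb{B} + \mathbb{B}_{\epsilon_{0}}$. Each is convex, as a Minkowski sum of convex sets, and each is open, as a union of translates of the open ball $\mathbb{B}_{\epsilon_{0}}$; by the choice of $\epsilon_{0}$, they are disjoint. The first geometric form of Hahn--Banach (see \citealp[Chapter 1]{Brezis:2010aa}) then supplies a non-zero $\phi \in \mathbb{E}^{*}$ and $\alpha \in \mathbb{R}$ satisfying $\phi(u) \le \alpha \le \phi(v)$ for all $u \in \mathbb{A}'$ and $v \in \mathbb{B}'$.

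To upgrade this to the strict separation claimed in the statement, I would pick any $w \in \mathbb{E}$ with $\|w\|_{\mathbb{E}} < \epsilon_{0}$ and $\phi(w) > 0$; this is possible because $\phi \ne 0$ and we may rescale. For every $u \in \mathbb{A}$ and $v \in \mathbb{B}$, the points $u + w$ and $v - w$ lie in $\mathbb{A}'$ and $\mathbb{B}'$ respectively, so $\phi(u) \le \alpha - \phi(w)$ and $\phi(v) \ge \alpha + \phi(w)$. Setting $\epsilon := \phi(w)/2 > 0$ then yields the uniform strict separation, with the equivalent form $\sup_{\mathbb{A}}\phi < \inf_{\mathbb{B}}\phi$ following because $\mathbb{B}$ is compact and $\phi$ is continuous. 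The main obstacle is the positivity of $d$ in the first step: both the closedness of $\mathbb{A}$ and the compactness of $\mathbb{B}$ are essential here, since in a general NVS two disjoint closed convex sets can have distance zero (e.g.\ the graphs of $y = 0$ and $y = 1/x$ in $\mathbb{R}^{2}$); once that uniform buffer is secured, the reduction to the first geometric form and the concluding translation argument are routine.
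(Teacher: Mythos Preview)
Your argument is correct and is essentially the standard textbook proof of the second geometric Hahn--Banach theorem: establish a positive distance between the closed set and the compact set, fatten both by a small open ball, apply the first geometric form, and then shift by a well-chosen vector to obtain the uniform gap. This is, up to cosmetic rearrangement, exactly the proof given in \citet[Theorem 1.7]{Brezis:2010aa}.

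The paper itself does not prove this lemma at all: it is listed in the Appendix among the ``technical results'' with the remark that ``Lemmas \ref{lem hahn banach}--\ref{lem Riesz 1} appear in \citet{Brezis:2010aa} as Theorems 1.7, 4.11, and 4.14, respectively.'' So there is no in-paper proof to compare against; you have supplied precisely the argument the cited reference gives.

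Two small remarks. First, the hypothesis in the paper's statement reads $\mathbb{A}\cap\mathbb{B}\ne\emptyset$, which is an evident typo for $\mathbb{A}\cap\mathbb{B}=\emptyset$; you have (correctly) worked with the intended disjointness assumption. Second, the paper states the lemma for a locally convex linear topological space $\mathbb{E}$, whereas your proof uses norm balls and hence tacitly assumes $\mathbb{E}$ is an NVS. This is harmless here because every application in the paper (Lemmas \ref{lem main 1} and \ref{lem main p}) is in an $\mathcal{L}_{p}$ space; for the general locally convex case one replaces $\mathbb{B}_{\epsilon_{0}}$ by a suitable open convex neighbourhood of $0$, and the rest of your argument goes through verbatim.
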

\begin{lem}
[Riesz representation theorem for $\mathcal{L}_p$, $p\in\mathbb{R}_{+}$]\label{lem Riesz p}If
$p\in\mathbb{R}_{+}$, and $\phi\in\left(\mathcal{L}_{p}\right)^{*}$,
then, there exists a unique function $u\in\mathcal{L}_{q}$, such
that for all $v\in\mathcal{L}_{q}$,
\[
\phi\left(v\right)=\int u\left(x\right)v\left(x\right)\text{d}\lambda\left(x\right)\text{,}
\]
where $1/p+1/q=1$.
\end{lem}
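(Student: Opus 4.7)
The plan is to establish the representation via the classical Radon-Nikodym route, which works cleanly because Lebesgue measure on $\mathbb{R}^{n}$ is $\sigma$-finite. I would work under the standing assumption that $p\in[1,\infty)$ with conjugate exponent $q$ determined by $1/p+1/q=1$, and read the identity in the statement as holding for every $v\in\mathcal{L}_{p}$ (the $v\in\mathcal{L}_{q}$ in the display appears to be a typographical slip, as is clear from its invocation in Lemma~\ref{lem main 1}).

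First I would define a set function $\nu$ on Borel sets of finite Lebesgue measure by $\nu(E)=\phi(\mathbf{1}_{E})$, which is well-defined because $\mathbf{1}_{E}\in\mathcal{L}_{p}$ for such $E$. The structural properties to verify are countable additivity, which follows from the continuity of $\phi$ together with the convergence $\mathbf{1}_{\bigcup_{k=1}^{N}E_{k}}\to\mathbf{1}_{\bigcup_{k}E_{k}}$ in $\mathcal{L}_{p}$ by dominated convergence, and absolute continuity with respect to $\lambda$, which is immediate since $\lambda(E)=0$ forces $\mathbf{1}_{E}=0$ in $\mathcal{L}_{p}$ and hence $\nu(E)=\phi(0)=0$. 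The Radon-Nikodym theorem, applied along a $\sigma$-finite exhaustion of $\mathbb{R}^{n}$ by balls $\mathbb{B}_{N}$, will then yield a locally integrable function $u$ such that $\phi(\mathbf{1}_{E})=\int_{E}u\,\text{d}\lambda$ for every $E$ of finite Lebesgue measure. By linearity, the identity $\phi(v)=\int uv\,\text{d}\lambda$ extends to every simple function with support of finite measure, and because such simple functions are dense in $\mathcal{L}_{p}$ for $p<\infty$, the identity will extend to all of $\mathcal{L}_{p}$ once it is continuous in $v$ on both sides, i.e.\ once $u\in\mathcal{L}_{q}$ is known.

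The main obstacle, and the crux of the argument, will be showing $u\in\mathcal{L}_{q}$ with the sharp bound $\|u\|_{\mathcal{L}_{q}}\le\|\phi\|_{(\mathcal{L}_{p})^{*}}$. For $p>1$, my plan is to test $\phi$ against the truncations
\[
v_{N}=\mathbf{1}_{\mathbb{B}_{N}\cap\{|u|\le N\}}\,|u|^{q-1}\operatorname{sgn}(u)\text{,}
\]
which lie in $\mathcal{L}_{p}$ and satisfy the H\"{o}lder-type identity $\|v_{N}\|_{\mathcal{L}_{p}}^{p}=\int_{\mathbb{B}_{N}\cap\{|u|\le N\}}|u|^{q}\,\text{d}\lambda$. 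The operator bound $\phi(v_{N})\le\|\phi\|_{(\mathcal{L}_{p})^{*}}\|v_{N}\|_{\mathcal{L}_{p}}$ then combines with the explicit formula $\phi(v_{N})=\int|v_N||u|\,\text{d}\lambda = \int_{\mathbb{B}_{N}\cap\{|u|\le N\}}|u|^{q}\,\text{d}\lambda$ to yield a uniform bound on these truncated integrals, from which monotone convergence produces $\|u\|_{\mathcal{L}_{q}}\le\|\phi\|_{(\mathcal{L}_{p})^{*}}$. The edge case $p=1$ requires a separate argument: I would show $|u|\le\|\phi\|_{(\mathcal{L}_{1})^{*}}$ almost everywhere by contradiction, testing $\phi$ on $\operatorname{sgn}(u)\mathbf{1}_{A}$ for any set $A\subset\mathbb{B}_{N}$ of positive measure on which $|u|$ would exceed $\|\phi\|_{(\mathcal{L}_{1})^{*}}$.

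Uniqueness is routine and would be handled last: any two representers $u_{1},u_{2}\in\mathcal{L}_{q}$ satisfy $\int(u_{1}-u_{2})v\,\text{d}\lambda=0$ for every $v\in\mathcal{L}_{p}$, and choosing $v$ appropriately (for instance $v=\mathbf{1}_{\mathbb{B}_{N}\cap\{|u_{1}-u_{2}|\le M\}}|u_{1}-u_{2}|^{q-1}\operatorname{sgn}(u_{1}-u_{2})$ when $q<\infty$, or an indicator of a level set when $q=\infty$) forces $u_{1}=u_{2}$ almost everywhere. The technical delicacies I expect to need care with are the passage from the finite-measure setting of $\nu$ to all of $\mathbb{R}^{n}$ via the $\sigma$-finite exhaustion, and the monotone/dominated convergence arguments that justify exchanging limits with the action of $\phi$ and with the integrals defining $\|v_{N}\|_{\mathcal{L}_{p}}$.
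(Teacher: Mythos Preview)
Your proposal is correct and follows the classical Radon--Nikodym route, and you are right that the display should read $v\in\mathcal{L}_{p}$ rather than $v\in\mathcal{L}_{q}$. However, the paper does not actually supply its own proof of this lemma: it is stated in the Appendix as a technical prerequisite and simply cited from \citet{Brezis:2010aa} (Theorem~4.11), so there is no in-paper argument to compare against. Your sketch is essentially the standard textbook proof (as found, for example, in Folland or Rudin), with the expected care taken for the $\sigma$-finite exhaustion and the separate treatment of $p=1$; nothing in it is mistaken or incomplete as a plan.
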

\begin{lem}
[Riesz representation theorem for $\mathcal{L}_1$]\label{lem Riesz 1}If
$\phi\in\left(\mathcal{L}_{1}\right)^{*}$, then there exists a unique
$u\in\mathcal{L}_{\infty}$, such that for all $v\in\mathcal{L}_{1}$,
\[
\phi\left(v\right)=\int u\left(x\right)v\left(x\right)\text{d}\lambda\left(x\right)\text{.}
\]
\end{lem}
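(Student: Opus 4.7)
\emph{The plan is} to produce $u$ as a Radon--Nikodym density, to extend the integral representation first to simple functions and then to all of $\mathcal{L}_{1}$, and finally to bound $u$ in $\mathcal{L}_{\infty}$ by testing $\phi$ against sign-type functions. Since $\left(\mathbb{R}^{n},\lambda\right)$ is $\sigma$-finite, fix the exhaustion $\mathbb{B}_{1}\subset\mathbb{B}_{2}\subset\cdots$ by open balls with $\lambda\left(\mathbb{B}_{k}\right)<\infty$ and $\bigcup_{k}\mathbb{B}_{k}=\mathbb{R}^{n}$. For each measurable $\mathbb{A}\subset\mathbb{B}_{k}$, the indicator $\mathbf{1}_{\mathbb{A}}$ lies in $\mathcal{L}_{1}$, so $\nu_{k}\left(\mathbb{A}\right)=\phi\left(\mathbf{1}_{\mathbb{A}}\right)$ is well defined.

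I would first verify that $\nu_{k}$ is a finite signed measure: countable additivity for a disjoint sequence $\left\{\mathbb{A}_{j}\right\}_{j\in\mathbb{N}}$ follows from the continuity of $\phi$ together with $\mathbf{1}_{\bigcup_{j\le N}\mathbb{A}_{j}}\to\mathbf{1}_{\bigcup_{j\in\mathbb{N}}\mathbb{A}_{j}}$ in $\mathcal{L}_{1}$, where the norm of the difference equals $\lambda\bigl(\bigcup_{j>N}\mathbb{A}_{j}\bigr)\to 0$. Moreover $\nu_{k}\ll\lambda$, since $\lambda\left(\mathbb{A}\right)=0$ makes $\mathbf{1}_{\mathbb{A}}$ the zero element of $\mathcal{L}_{1}$. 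The Radon--Nikodym theorem then supplies a measurable density $u_{k}:\mathbb{B}_{k}\to\mathbb{R}$ satisfying $\phi\left(\mathbf{1}_{\mathbb{A}}\right)=\int_{\mathbb{A}}u_{k}\mathrm{d}\lambda$, and almost-everywhere uniqueness of these densities lets me glue them into a single measurable $u$ on $\mathbb{R}^{n}$. Linearity immediately extends the formula $\phi\left(s\right)=\int us\mathrm{d}\lambda$ to all simple functions $s$ supported in some $\mathbb{B}_{k}$.

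Next I would establish $\left\Vert u\right\Vert_{\mathcal{L}_{\infty}}\le\left\Vert\phi\right\Vert_{\left(\mathcal{L}_{1}\right)^{*}}$. Suppose for contradiction that $\lambda\left(\left\{x:\left|u\left(x\right)\right|>\left\Vert\phi\right\Vert\right\}\right)>0$; pick $\mathbb{A}$ a subset of this set with $0<\lambda\left(\mathbb{A}\right)<\infty$, split $\mathbb{A}=\mathbb{A}^{+}\cup\mathbb{A}^{-}$ according to the sign of $u$, and test $\phi$ against the simple function $s=\mathbf{1}_{\mathbb{A}^{+}}-\mathbf{1}_{\mathbb{A}^{-}}$. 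The previous step gives $\phi\left(s\right)=\int_{\mathbb{A}}\left|u\right|\mathrm{d}\lambda>\left\Vert\phi\right\Vert\lambda\left(\mathbb{A}\right)=\left\Vert\phi\right\Vert\left\Vert s\right\Vert_{\mathcal{L}_{1}}$, contradicting the boundedness of $\phi$. With $u\in\mathcal{L}_{\infty}$ in hand, the identity $\phi\left(v\right)=\int uv\mathrm{d}\lambda$ extends to all $v\in\mathcal{L}_{1}$ by density of compactly supported simple functions in $\mathcal{L}_{1}$, with H\"older's inequality ensuring the right-hand side depends continuously on $v$. Uniqueness follows by testing the difference $u_{1}-u_{2}$ of two candidates against $\mathrm{sign}\left(u_{1}-u_{2}\right)\mathbf{1}_{\mathbb{B}_{k}}\in\mathcal{L}_{1}$ and letting $k\to\infty$.

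\emph{The main obstacle} I anticipate is the essential boundedness step: the Radon--Nikodym density $u$ is merely measurable and locally integrable a priori, and converting the $\mathcal{L}_{1}$-continuity of $\phi$ into an honest $\mathcal{L}_{\infty}$ bound requires precisely the sign-function test above, which in turn relies on having already extended the representation from indicators to simple functions. The remaining ingredients --- countable additivity of $\nu_{k}$, absolute continuity, gluing the local densities via $\sigma$-finiteness, and the final density argument --- are routine once the right test functions are in view.
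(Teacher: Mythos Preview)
Your argument is correct: this is the classical Radon--Nikodym route to the dual of $\mathcal{L}_{1}$ over a $\sigma$-finite measure space, and each step (countable additivity of $\nu_{k}$ via $\mathcal{L}_{1}$-continuity, absolute continuity, gluing the local densities, the sign-test for the $\mathcal{L}_{\infty}$ bound, extension by density, and uniqueness) is sound as written.

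There is, however, nothing in the paper to compare it against. Lemma~\ref{lem Riesz 1} is one of the technical results collected in the Appendix and is not proved in the paper; the authors simply cite it as Theorem~4.14 of Brezis. For what it is worth, Brezis's proof proceeds somewhat differently: rather than invoking Radon--Nikodym directly, he first treats the finite-measure case by restricting $\phi$ to $\mathcal{L}_{2}\subset\mathcal{L}_{1}$ and applying the Hilbert-space Riesz--Fr\'echet theorem to obtain an $\mathcal{L}_{2}$ density, then upgrades it to $\mathcal{L}_{\infty}$ by a truncation/sign-test argument close to yours, and finally passes to the $\sigma$-finite case by exhaustion. Your approach is arguably more direct, since it avoids the detour through $\mathcal{L}_{2}$, at the cost of assuming Radon--Nikodym as a black box; Brezis's version is more self-contained from a functional-analytic standpoint. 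Either way, your sketch would serve perfectly well as a proof of the lemma.
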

\begin{lem}
\label{lem part unity}Let $\mathbb{V}_{1},\dots,\mathbb{V}_{n}$
be open subsets of $\mathbb{R}^{n}$, and let $\mathbb{K}$ be a compact
set, such that $\mathbb{K}\subset\bigcup_{i=1}^{n}\mathbb{V}_{i}$.
Then, there exists functions $h_{i}\prec\mathbb{V}_{i}$ ($i\in\left[n\right]$),
such that $\sum_{i=1}^{n}h_{i}\left(x\right)=1$, for all $x\in\mathbb{K}$.
The set $\left\{ h_{i}\right\} $ is referred to as the partition
of unity on $\mathbb{K}$, subordinated to the cover $\left\{ \mathbb{V}_{i}\right\} $.
\end{lem}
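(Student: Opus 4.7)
The plan is to follow the classical three-step construction of a partition of unity subordinate to a finite open cover: (i) shrink the cover so that each open set contains a compact piece whose union still covers $\mathbb{K}$; (ii) build a continuous bump $g_{i}\prec\mathbb{V}_{i}$ that is identically $1$ on the $i$-th compact piece; (iii) telescope these bumps into a sum that equals $1$ on $\mathbb{K}$.

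For the shrinking step, for every $x\in\mathbb{K}$ I would pick an index $i(x)\in[n]$ with $x\in\mathbb{V}_{i(x)}$ and a radius $r_{x}>0$ small enough that $\overline{\mathbb{B}(x,r_{x})}\subset\mathbb{V}_{i(x)}$, which is possible since $\mathbb{V}_{i(x)}$ is open. Extracting a finite subcover $\{\mathbb{B}(x_{j},r_{j})\}_{j=1}^{N}$ of $\mathbb{K}$ by compactness and grouping the corresponding closed balls by the tag $i(x_{j})$ yields
\[
\mathbb{K}_{i}=\bigcup_{j:\,i(x_{j})=i}\overline{\mathbb{B}(x_{j},r_{j})}\subset\mathbb{V}_{i},\qquad i\in[n],
\]
with $\mathbb{K}_{i}=\emptyset$ if no $x_{j}$ carries index $i$. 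Each $\mathbb{K}_{i}$ is compact and $\mathbb{K}\subset\bigcup_{i=1}^{n}\mathbb{K}_{i}$. I would then insert an intermediate open set $\mathbb{U}_{i}$ with $\mathbb{K}_{i}\subset\mathbb{U}_{i}\subset\overline{\mathbb{U}_{i}}\subset\mathbb{V}_{i}$ and $\overline{\mathbb{U}_{i}}$ compact, for example by slightly enlarging each $\overline{\mathbb{B}(x_{j},r_{j})}$ to an open ball still contained in $\mathbb{V}_{i(x_{j})}$ and taking the union.

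With these sets in hand, I would define bumps via the standard distance-ratio formula. Writing $d(x,\mathbb{A})=\inf_{y\in\mathbb{A}}\|x-y\|_{2}$, set
\[
g_{i}(x)=\frac{d(x,\mathbb{U}_{i}^{\complement})}{d(x,\mathbb{U}_{i}^{\complement})+d(x,\mathbb{K}_{i})}
\]
when $\mathbb{K}_{i}\neq\emptyset$, and $g_{i}\equiv 0$ otherwise. Disjointness of the closed sets $\mathbb{K}_{i}$ and $\mathbb{U}_{i}^{\complement}$ keeps the denominator strictly positive everywhere; continuity of distance functions then gives $g_{i}\in\mathcal{C}$ with $0\le g_{i}\le 1$, $g_{i}=1$ on $\mathbb{K}_{i}$, and $g_{i}=0$ off $\mathbb{U}_{i}$, so $\mathrm{supp}\,g_{i}\subset\overline{\mathbb{U}_{i}}$ is compact and contained in $\mathbb{V}_{i}$, giving $g_{i}\prec\mathbb{V}_{i}$.

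Finally, setting $h_{1}=g_{1}$ and $h_{i}=g_{i}\prod_{j=1}^{i-1}(1-g_{j})$ for $i\ge 2$ gives $h_{i}\in\mathcal{C}$ with $0\le h_{i}\le g_{i}\le 1$ and $\mathrm{supp}\,h_{i}\subset\mathrm{supp}\,g_{i}\subset\mathbb{V}_{i}$, i.e., $h_{i}\prec\mathbb{V}_{i}$. A one-line induction yields
\[
\sum_{i=1}^{k}h_{i}=1-\prod_{i=1}^{k}(1-g_{i}),\qquad k\in[n].
\]
For any $x\in\mathbb{K}$ there is some $i$ with $x\in\mathbb{K}_{i}$, hence $g_{i}(x)=1$, the product with $k=n$ vanishes at $x$, and $\sum_{i=1}^{n}h_{i}(x)=1$ follows. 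The only delicate point is the \emph{strict} shrinking that guarantees positive separation between $\mathbb{K}_{i}$ and $\mathbb{V}_{i}^{\complement}$; without it the distance-ratio bump either fails to equal $1$ on $\mathbb{K}_{i}$ or fails to have compact support inside $\mathbb{V}_{i}$, whereas once that separation is arranged the remainder is formal algebra.
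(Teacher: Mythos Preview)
Your argument is correct and complete. Note, however, that the paper does not actually prove this lemma: it is listed among the ``unproved results'' in the appendix and attributed to \citet[Theorem 2.13]{Rudin:1987aa}. Your construction is essentially Rudin's proof specialized to $\mathbb{R}^{n}$: the shrinking step and the telescoping formula $\sum_{i=1}^{k}h_{i}=1-\prod_{i=1}^{k}(1-g_{i})$ are exactly his, while you replace his appeal to Urysohn's lemma for the bumps $g_{i}$ by the explicit distance-ratio formula, which is the standard metric-space proof of Urysohn's lemma. So there is nothing to compare against in the paper itself, and what you have written is precisely the cited reference's argument made self-contained.
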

\begin{lem}
\label{lem ball cover}If $\mathbb{X}\subset\mathbb{R}^{n}$ is bounded,
then for any $r>0$, $\mathbb{X}$ can be covered by $\bigcup_{i=1}^{m}\mathbb{B}\left(x_{i},r\right)$,
for some finite $m\in\mathbb{N}$, where $x_{i}\in\mathbb{R}^{n}$
and $i\in\left[m\right]$.
\end{lem}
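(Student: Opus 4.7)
The plan is to reduce this statement to the compactness of closed balls in $\mathbb{R}^{n}$ (Heine--Borel), so that the existence of a finite subcover can be extracted from an obvious open cover. The statement is essentially the fact that a bounded subset of $\mathbb{R}^{n}$ is totally bounded, which in $\mathbb{R}^{n}$ is immediate once we have Heine--Borel available.

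First, since $\mathbb{X}$ is bounded, I would choose some $R\in\mathbb{R}_{+}$ such that $\mathbb{X}\subset\overline{\mathbb{B}}_{R}$. By the Heine--Borel theorem, the closed ball $\overline{\mathbb{B}}_{R}$ is compact in $\mathbb{R}^{n}$. Next, I would consider the family of open balls $\{\mathbb{B}(y,r):y\in\overline{\mathbb{B}}_{R}\}$, which trivially covers $\overline{\mathbb{B}}_{R}$ (since each $y$ lies in its own ball $\mathbb{B}(y,r)$). By the definition of compactness, this open cover admits a finite subcover $\mathbb{B}(x_{1},r),\dots,\mathbb{B}(x_{m},r)$ with $x_{i}\in\overline{\mathbb{B}}_{R}\subset\mathbb{R}^{n}$, which in particular also covers $\mathbb{X}$.

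An alternative, more constructive route would be a grid argument: pick $R$ as above so $\mathbb{X}\subset[-R,R]^{n}$, then partition $[-R,R]^{n}$ into axis-aligned cubes of side length strictly less than $r/\sqrt{n}$; each such cube has Euclidean diameter less than $r$ and is therefore contained in a ball of radius $r$ about any of its points. This produces an explicit $m=\lceil 2R\sqrt{n}/r\rceil^{n}$, which I would only invoke if a quantitative bound were needed.

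I do not anticipate any real obstacle: the only subtleties are remembering to enlarge $\mathbb{X}$ to a compact set before applying compactness (since $\mathbb{X}$ itself is only assumed bounded, not closed), and ensuring the centres $x_{i}$ are allowed to lie in $\mathbb{R}^{n}$ rather than being required to belong to $\mathbb{X}$ --- both are handled by the closed-ball containment $\mathbb{X}\subset\overline{\mathbb{B}}_{R}$.
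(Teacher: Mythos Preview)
Your argument is correct: enlarging $\mathbb{X}$ to a closed ball $\overline{\mathbb{B}}_{R}$, invoking Heine--Borel, and extracting a finite subcover from the open cover $\{\mathbb{B}(y,r):y\in\overline{\mathbb{B}}_{R}\}$ is exactly the standard route to total boundedness in $\mathbb{R}^{n}$, and the grid alternative you sketch is also sound (and gives a quantitative $m$ the compactness argument does not).

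As for comparison with the paper: the paper does not actually supply a proof of this lemma. It is listed among the technical results in the Appendix with only a citation to \citet[Thm.~1.2.2]{Conway:2012aa}. Your write-up therefore goes beyond what the paper provides; either of your two approaches would serve as a self-contained justification where the paper relies on an external reference.
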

\begin{lem}
\label{lem inclusion}If $1\le p\le q\le r\le\infty$, then $\mathcal{L}_{p}\cap\mathcal{L}_{r}\subset\mathcal{L}_{q}$.
\end{lem}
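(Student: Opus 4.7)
The plan is to split the argument into two cases based on whether $r=\infty$ or $r<\infty$, noting that the endpoint cases $q=p$ and $q=r$ are trivial because $\mathcal{L}_{p}\cap\mathcal{L}_{r}$ is contained in each factor. I may therefore assume $p<q<r$. The content of the statement is the classical log-convexity of $\|\cdot\|_{\mathcal{L}_{q}}$ in the parameter $1/q$, and the argument is short.

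When $r=\infty$, the key observation is that $|f|\le\|f\|_{\mathcal{L}_{\infty}}$ almost everywhere, so $|f|^{q}\le\|f\|_{\mathcal{L}_{\infty}}^{q-p}|f|^{p}$ pointwise almost everywhere; integrating then bounds $\|f\|_{\mathcal{L}_{q}}^{q}$ by a finite multiple of $\|f\|_{\mathcal{L}_{p}}^{p}$, so $f\in\mathcal{L}_{q}$.

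When $r<\infty$, I would interpolate via H\"older's inequality. Choosing $\theta\in(0,1)$ to satisfy $\frac{1}{q}=\frac{\theta}{p}+\frac{1-\theta}{r}$, which is possible since $1/q$ lies between $1/r$ and $1/p$, I would factor $|f|^{q}=|f|^{\theta q}\cdot|f|^{(1-\theta)q}$ and apply H\"older's inequality with the conjugate exponents $\alpha=p/(\theta q)$ and $\beta=r/((1-\theta)q)$. The identity $1/\alpha+1/\beta=\theta q/p+(1-\theta)q/r=1$ holds by the choice of $\theta$, and the resulting bound $\|f\|_{\mathcal{L}_{q}}\le\|f\|_{\mathcal{L}_{p}}^{\theta}\|f\|_{\mathcal{L}_{r}}^{1-\theta}$ yields $f\in\mathcal{L}_{q}$.

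There is no substantial obstacle; the only care point, and therefore what counts as the main (mild) difficulty, is verifying that both H\"older exponents $\alpha$ and $\beta$ are at least $1$ so that the inequality actually applies. This reduces to the elementary inequalities $\theta q\le p$ and $(1-\theta)q\le r$, each of which follows directly from the closed-form expression $\theta=(1/q-1/r)/(1/p-1/r)$ together with the ordering $p\le q\le r$.
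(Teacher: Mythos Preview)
Your proof is correct and is exactly the standard interpolation argument; the paper does not supply its own proof of this lemma but simply cites \citet[Prop.~6.10]{Folland:1999aa}, whose argument is the same H\"older interpolation (with the $r=\infty$ case handled separately) that you give here.
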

\begin{lem}
\label{lem convolution}If $f\in\mathcal{L}_{p}$ ($1\le p\le\infty$)
and $g\in\mathcal{L}_{1}$, then $f\star g$ exists and we have $\left\Vert f\star g\right\Vert _{\mathcal{L}_{p}}\le\left\Vert f\right\Vert _{\mathcal{L}_{p}}\left\Vert f\right\Vert _{\mathcal{L}_{1}}$.
Furthermore, if $p$ and $q$ are such that $1/p+1/q=1$, then $f\in\mathcal{L}_{p}$
and $g\in\mathcal{L}_{q}$, then $f\star g$ exists, is bounded and
uniformly continuous, and $\left\Vert f\star g\right\Vert _{\mathcal{L}_{\infty}}\le\left\Vert f\right\Vert _{\mathcal{L}_{p}}\left\Vert f\right\Vert _{\mathcal{L}_{q}}$.
In particular, if $p\in\mathbb{R}_{+}$, then $f\star g\in\mathcal{C}_{0}$.
\end{lem}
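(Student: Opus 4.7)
The plan is to peel off Lemma \ref{lem convolution} into its three classical sub-claims: Young's inequality for $\mathcal{L}_{p}\star\mathcal{L}_{1}$, the pointwise bound and uniform continuity under conjugate Hölder pairing, and the vanishing-at-infinity refinement. Each is standard and relies on Tonelli/Fubini, Hölder's inequality, and continuity of translation in $\mathcal{L}_{s}$ for $s<\infty$.

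For the first bound $\|f\star g\|_{\mathcal{L}_{p}}\le\|f\|_{\mathcal{L}_{p}}\|g\|_{\mathcal{L}_{1}}$ I would split by exponent. The case $p=1$ follows at once from Tonelli's theorem, which simultaneously delivers the almost-everywhere existence of $f\star g$ and the $\mathcal{L}_{1}$ bound. The case $p=\infty$ follows from the pointwise estimate $|(f\star g)(x)|\le\int|f(x-y)||g(y)|\,\text{d}\lambda(y)\le\|f\|_{\mathcal{L}_{\infty}}\|g\|_{\mathcal{L}_{1}}$. For $1<p<\infty$, I would use the standard factorisation $|f(x-y)||g(y)|=|f(x-y)||g(y)|^{1/p}\cdot|g(y)|^{1/q}$, with $q=p/(p-1)$, and invoke Hölder's inequality to obtain
\[
|(f\star g)(x)|^{p}\le\|g\|_{\mathcal{L}_{1}}^{p/q}\int|f(x-y)|^{p}|g(y)|\,\text{d}\lambda(y)\text{.}
\]
Integrating in $x$ and swapping the order of integration using Tonelli's theorem then closes the estimate, since $1+p/q=p$.

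For the Hölder-paired case ($1/p+1/q=1$, $f\in\mathcal{L}_{p}$, $g\in\mathcal{L}_{q}$) boundedness is immediate from Hölder applied pointwise: $|(f\star g)(x)|\le\|f(x-\cdot)\|_{\mathcal{L}_{p}}\|g\|_{\mathcal{L}_{q}}=\|f\|_{\mathcal{L}_{p}}\|g\|_{\mathcal{L}_{q}}$, uniformly in $x$. For uniform continuity I would invoke continuity of translation in $\mathcal{L}_{s}$ for $s<\infty$; since $1/p+1/q=1$ forces at least one of $p,q$ to be finite, choosing the finite index yields
\[
|(f\star g)(x)-(f\star g)(x')|\le\|f(x-\cdot)-f(x'-\cdot)\|_{\mathcal{L}_{p}}\|g\|_{\mathcal{L}_{q}}
\]
(with the roles of $f$ and $g$ swapped in the extremal case $p=\infty$, $q=1$). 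The right-hand side depends only on $\|x-x'\|_{2}$ and tends to $0$ as $\|x-x'\|_{2}\to0$, establishing uniform continuity.

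For the vanishing-at-infinity refinement, the plan is a density argument. Approximate $f$ and $g$ by continuous compactly supported functions $f_{j},g_{j}\in\mathcal{C}_{c}$ in $\mathcal{L}_{p}$ and $\mathcal{L}_{q}$ respectively, which is permissible precisely when both indices are finite. For each $j$, $f_{j}\star g_{j}$ is continuous by dominated convergence and supported in the compact Minkowski sum $\mathrm{supp}\,f_{j}+\mathrm{supp}\,g_{j}$, hence belongs to $\mathcal{C}_{c}\subset\mathcal{C}_{0}$. Combining the pointwise bound from the previous paragraph with the triangle inequality gives
\[
\|f\star g-f_{j}\star g_{j}\|_{\mathcal{L}_{\infty}}\le\|f-f_{j}\|_{\mathcal{L}_{p}}\|g_{j}\|_{\mathcal{L}_{q}}+\|f\|_{\mathcal{L}_{p}}\|g-g_{j}\|_{\mathcal{L}_{q}}\longrightarrow0\text{,}
\]
so $f\star g$ is a uniform limit of $\mathcal{C}_{0}$ functions, and since $\mathcal{C}_{0}$ is closed under uniform convergence, $f\star g\in\mathcal{C}_{0}$. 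The most delicate step is the uniform continuity part, where one must pick the correct finite exponent and carefully invoke the translation-continuity theorem; everything else is bookkeeping with Fubini and Hölder.
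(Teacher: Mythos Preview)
Your argument is correct and is essentially the standard textbook proof of Young's inequality together with the uniform-continuity and $\mathcal{C}_{0}$ refinements. Note, however, that the paper does not supply its own proof of this lemma: it is listed among the unproved technical results in the Appendix, with the source given as Proposition~8.8 of \citet{Folland:1999aa}. Your write-up reproduces precisely the argument found there (Tonelli for $p=1$, the H\"older splitting $|f(x-y)||g(y)|^{1/p}\cdot|g(y)|^{1/q}$ for intermediate $p$, translation-continuity in $\mathcal{L}_{s}$ for $s<\infty$ to get uniform continuity, and density of $\mathcal{C}_{c}$ for the $\mathcal{C}_{0}$ claim), so there is no methodological divergence to discuss.

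One small remark: you correctly flag that the $\mathcal{C}_{0}$ conclusion requires \emph{both} exponents to be finite, i.e.\ $1<p<\infty$; the paper's phrasing ``$p\in\mathbb{R}_{+}$'' should be read in that spirit, since at the endpoint $p=1$, $q=\infty$ the density of $\mathcal{C}_{c}$ in $\mathcal{L}_{\infty}$ fails and the conclusion is false in general (take $g\equiv 1$).
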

\begin{lem}
[Fubini's Theorem]\label{lem fubini}Let $\left(\mathbb{X},\mathcal{X},\nu_{1}\right)$
and $\left(\mathbb{Y},\mathcal{Y},\nu_{2}\right)$ be $\sigma\text{-finite}$
measure spaces, and assume that $f$ is a $\left(\mathcal{X}\times\mathcal{Y}\right)\text{-measurable}$
function on $\mathbb{X}\times\mathbb{Y}$. If
\[
\int_{\mathbb{X}}\left[\int_{\mathbb{Y}}\left|f\left(x,y\right)\right|\text{d}\nu_{1}\left(x\right)\right]\text{d}\nu_{2}\left(y\right)<\infty\text{,}
\]
then 
\begin{align*}
\int_{\mathbb{X}\times\mathbb{Y}}\left|f\right|\text{d}\left(\nu_{1}\times\nu_{2}\right) & =\int_{\mathbb{X}}\left[\int_{\mathbb{Y}}\left|f\left(x,y\right)\right|\text{d}\nu_{1}\left(x\right)\right]\text{d}\nu_{2}\left(y\right) =\int_{\mathbb{Y}}\left[\int_{\mathbb{X}}\left|f\left(x,y\right)\right|\text{d}\nu_{2}\left(y\right)\right]\text{d}\nu_{1}\left(x\right)<\infty\text{.}
\end{align*}
\end{lem}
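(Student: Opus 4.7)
The plan is to establish Fubini's theorem through the classical chain of extensions: measurable rectangles, then nonnegative simple functions, then general nonnegative measurable functions (this is Tonelli), and finally the absolutely integrable case. Since the hypothesis of the lemma is already phrased in terms of $|f|$, the stated identity actually amounts to Tonelli applied to $|f|$, so the signed decomposition $f = f^+ - f^-$ will not be needed.

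The first step is to verify that for every $E \in \mathcal{X} \otimes \mathcal{Y}$ the sections $E_x = \{y : (x,y) \in E\}$ and $E^y = \{x : (x,y) \in E\}$ lie in $\mathcal{Y}$ and $\mathcal{X}$ respectively. The collection of $E$ with this property is a $\sigma$-algebra containing the measurable rectangles, hence equals $\mathcal{X} \otimes \mathcal{Y}$. The second step, which is the heart of the argument, is to show that for any $E \in \mathcal{X}\otimes\mathcal{Y}$ the maps $x \mapsto \nu_2(E_x)$ and $y \mapsto \nu_1(E^y)$ are measurable and
\[
(\nu_1 \times \nu_2)(E) \;=\; \int \nu_2(E_x)\, d\nu_1(x) \;=\; \int \nu_1(E^y)\, d\nu_2(y).
\]
I would prove this via a monotone class (or Dynkin $\pi$-$\lambda$) argument: verify the identity directly on finite disjoint unions of measurable rectangles, which form an algebra; observe that the collection of $E$ for which the identity holds is a monotone class; and conclude via the Monotone Class Theorem. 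To use this freely we first restrict to $E \subset \mathbb{X}_i \times \mathbb{Y}_j$ where $\{\mathbb{X}_i\}$, $\{\mathbb{Y}_j\}$ are increasing sequences of finite-measure sets exhausting $\mathbb{X}$ and $\mathbb{Y}$ (available by $\sigma$-finiteness), then remove the restriction by monotone convergence.

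Once the identity is established for indicator functions, linearity extends it to nonnegative simple functions, and the Monotone Convergence Theorem, applied to any increasing sequence $s_n \uparrow |f|$ of simple functions, extends it to $|f|$. This is exactly the lemma's conclusion. The main obstacle is the monotone-class step, and specifically the essential use of $\sigma$-finiteness: without it the measurability of $x \mapsto \nu_2(E_x)$ can fail, and the natural ``rectangle exhaustion'' of $\mathbb{X} \times \mathbb{Y}$ by finite-measure rectangles breaks down. Every other step is then a routine application of linearity of the integral and the Monotone Convergence Theorem.
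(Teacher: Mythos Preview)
Your outline is correct and follows the standard monotone-class route to Tonelli, which is exactly the argument in Rudin's \emph{Real and Complex Analysis}, Theorem~8.8. The paper, however, does not prove this lemma at all: it is listed among the technical results in the Appendix and is simply attributed to Rudin (1987). So there is no ``paper's own proof'' to compare against; your sketch effectively reproduces the cited source.
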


\subsection*{Sources for results}

Lemma \ref{lem approximate identities} appears in \citet[Thm. 9.3.3]{Makarov:2013aa}
and \citet[Ch. 20, Thm. 2]{Cheney:2000aa}. Corollary \ref{cor dilation}
is obtained from \citet[Ch. 20, Thm. 4]{Cheney:2000aa}. Lemmas \ref{lem NVS},
\ref{lem inclusion}, and \ref{lem convolution} are taken from Propositions
0.22, 6.10, and 8.8 \citet{Folland:1999aa}. Lemmas \ref{lem hahn banach}--\ref{lem Riesz 1}
appear in \citet{Brezis:2010aa} as Theorems 1.7, 4.11, and 4.14,
respectively. Lemmas \ref{lem part unity} and \ref{lem fubini} can
be found in \citet{Rudin:1987aa} as Theorems 2.13 and Theorem 8.8,
respectively. Lemma \ref{lem ball cover} is obtained from \citet[Thm. 1.2.2]{Conway:2012aa}.

\section{Acknowledgements}
The authors would like to very much thank Pr. Eric Ricard for the interesting discussions with him and for his suggestions. TTN is supported by ``Contrat doctoral" from the French Ministry of Higher Education and Research and by the French National Research Agency (ANR) grant SMILES ANR-18-CE40-0014. HDN and GJM are funded by Australian Research Council grant number DP180101192.

\bibliographystyle{apalike}
\bibliography{Bibliography}

\end{document}